\documentclass[a4paper,10pt,reqno]{amsart}
\usepackage{amsmath,amsfonts,amssymb,amsthm}
\usepackage{mathtools}
\usepackage{mathrsfs}
\usepackage{color}
\usepackage[margin=3cm]{geometry}
\usepackage[colorlinks=true,citecolor=blue,urlcolor=blue]{hyperref}
\usepackage{caption,enumerate}

\newtheorem{theorem}{Theorem}[section]
\newtheorem{lemma}[theorem]{Lemma}

\newtheorem{corollary}[theorem]{Corollary}
\newtheorem{remark}{Remark}[section]
\numberwithin{equation}{section}

\newcommand{\CC}{\ensuremath{\mathbb{C}}}
\newcommand{\RR}{\ensuremath{\mathbb{R}}}
\newcommand{\ZZ}{\ensuremath{\mathbb{Z}}}

\newcommand{\md}{\mathrm{d}}
\newcommand{\ve}{\varepsilon}
\newcommand{\mo}{\mathcal{O}}
\newcommand{\sm}{\setminus}

\usepackage{float}
\usepackage{indentfirst}
\usepackage{changepage}
\usepackage{setspace}
\usepackage{graphicx}
\usepackage{calc}

\usepackage{bbm}

\allowdisplaybreaks

\author[P.-C. Hang]{Peng-Cheng Hang}
\address{Peng-Cheng Hang\\Department of Mathematics, School of Mathematics and Statistics, Donghua University, Shanghai 201620, People's Republic of China.}\email{mathroc618@outlook.com}

\author[M.-J. Luo]{Min-Jie Luo}
\address{Min-Jie Luo\\Department of Mathematics, School of Mathematics and Statistics, Donghua University, Shanghai 201620,
	People's Republic of China.}\email{mathwinnie@live.com; mathwinnie@dhu.edu.cn}

\keywords{Riemann zeta function, $a$-points, value-distribution}
\subjclass[2020]{11M06, 11M26, 41A60}

\begin{document}
\title[Note on the $a$-points of $\zeta$-function]{Note on the $a$-points of the Riemann zeta function}
	
\begin{abstract}
	For any $a\in\CC$, the zeros of $\zeta(s)-a$, denoted by $\rho_a=\beta_a+i\gamma_a$, are called $a$-points of the Riemann zeta function $\zeta(s)$. In this paper, we reformulate some basic results about the $a$-points of $\zeta(s)$ shown by Garunk\v{s}tis and Steuding. We then deduce an asymptotic of the sum
	\[S_T(a,\delta)=\sum_{\tau<\gamma_a\leqslant T}\zeta'(\rho_a+i\delta)X^{\rho_a},\quad T\to\infty,\]
	where $0\ne\delta=\frac{2\pi\alpha}{\log\frac{T}{2\pi X}}\ll 1$, and $X>0$ and $\tau\geqslant|\delta|+1$ are fixed.
	
	We also find the interesting varied behavior of $S_T(a,\delta)$ in different $X$ ranges, which is more complicated than those described before by Gonek and Pearce-Crump.
\end{abstract}
	
\maketitle

\section{\bf Introduction}
	Let $\zeta(s)$ be the Riemann zeta function, $s=\sigma+it$ be a complex variable, $\rho=\beta+i\gamma$ be a non-trivial zero of $\zeta(s)$ and $a$ be any complex number. The zeros of $\zeta(s)-a$, say $\rho_a=\beta_a+i\gamma_a$, are called $a$-points of $\zeta(s)$. Additionally, $a$-points with $\beta_a\leqslant 0$ are said to be trivial. Clearly, when $a=0$, the $a$-points reduce to the ordinary zeros.
	
	There are many interesting investigations on the distribution of the values of the derivatives $\zeta^{(n)}(s)$ of $\zeta(s)$ at its zeros. For instance, Fujii proved in \cite{Fuji--2} that
	\begin{equation}\label{Fujii's result--0}
		\sum_{0<\gamma\leqslant T}\zeta'(\rho)=\frac{T}{4\pi}\log^2\left(\frac{T}{2\pi}\right)+(C_0-1)\frac{T}{2\pi}\log\frac{T}{2\pi}+\left(1-C_0-C_0^2+3C_1\right)\frac{T}{2\pi}+E(T),
	\end{equation}
	where $E(T)$ is given explicitly both unconditionally and assuming Riemann Hypothesis, and $C_0$ and $C_1$ are the constants in the expansion
	\[\zeta(s)=\frac{1}{s-1}+C_0+C_1(s-1)+\cdots.\]
	But there are lesser studies on the distribution concerning the $a$-points.
	
	A fundamental result on the $a$-points is the Riemann-von Mangoldt-type formula. In 1913, Landau \cite{BoLL} estimated the number of nontrivial $a$-points, namely
	\begin{equation}\label{Riemann-von Mangoldt formula}
		N_a(T):=\sum_{\substack{\beta_a>0\\1<\gamma_a\leqslant T}}1=\frac{T}{2\pi}\log\frac{T}{2\pi ec_a}+\mo(\log T),\quad T\to\infty
	\end{equation}
	with the constant $c_a=1$ if $a\ne 1$, and $c_1=2$. As with the usual Riemann-von Mangoldt formula, \eqref{Riemann-von Mangoldt formula} is of great importance in studying the distribution of $a$-points.
	
	In 2014, Garunk\v{s}tis and Steuding \cite{GaSt} showed that there are countably many trivial $a$-points, but their proof is not fully rigorous. They generalized \eqref{Fujii's result--0} by using \eqref{Riemann-von Mangoldt formula} and proving that
	\begin{equation}\label{GaSt's result}
		\sum_{0<\gamma_a\leqslant T}\zeta'(\rho_a)=\sum_{0<\gamma\leqslant T}\zeta'(\rho)-\frac{aT}{2\pi}\left\{\log^2\left(\frac{T}{2\pi}\right)-2\log\frac{T}{2\pi}+2\right\}+\mo\left(T^{\frac{1}{2}+\ve}\right).
	\end{equation}
	They also showed
	\begin{align}
		\sum_{0<\gamma_a\leqslant T}{}& \left(\zeta(\rho_a+i\delta)-a\right) \nonumber\\
		={}& \left(1-\frac{\sin(2\pi\alpha)}{2\pi\alpha}+i\pi\alpha\left(\frac{\sin(\pi\alpha)}{\pi\alpha}\right)^2-a(1-\cos(2\pi\alpha)+i\sin(2\pi\alpha))\right)\frac{T}{2\pi}\log\frac{T}{2\pi} \nonumber\\
		& +\frac{T}{2\pi}\Biggl(-1+e^{-2\pi i\alpha}\left(\frac{1}{i\delta}\left(\frac{1}{1-i\delta}-1\right)-\frac{1}{1-i\delta}\left(\zeta(1-i\delta)+\frac{1}{i\delta}\right)\right) \nonumber\\
		& +\frac{\zeta'}{\zeta}(1+i\delta)+\frac{1}{i\delta}-a\,\biggl\{1+\log c_a+\bigl(\cos(2\pi\alpha)-i\sin(2\pi\alpha)\bigr)\times \nonumber\\
		& \times\biggl(\frac{1}{\left(1-i\delta\right)^2}-\frac{2\pi i\alpha}{1-i\delta}\biggr)\biggr\}\Biggr)+E(T) \label{GaSt's result--1}
	\end{align}
	uniformly in $\alpha$, where $0\ne \delta=\frac{2\pi\alpha}{\log\frac{T}{2\pi}}\ll 1$. They further remarked in \cite[p. 13]{GaSt} that differentiation of the above formula with respect to $\alpha$ leads to \eqref{GaSt's result}. The idea of differentiation seems to originate in the work of Ingham in \cite[p. 275]{Ingh}.
	
	Jakhlouti and Mazhouda \cite{JaMa} then generalized \eqref{GaSt's result} by showing that for any fixed $X>0$,
	\begin{equation}\label{JaMa's result}
		\sum_{0<\gamma_a\leqslant T}\zeta'(\rho_a)X^{\rho_a}=\sum_{0<\gamma\leqslant T}\zeta'(\rho)X^{\rho}-\frac{aT}{2\pi}\left\{\log^2\left(\frac{T}{2\pi}\right)-2\log\frac{T}{2\pi}+2\right\}+\mo\left(T^{\frac{1}{2}+\ve}\right),
	\end{equation}
	where the sum on the right was evaluated by Fujii \cite{Fuji--2}\,: for any fixed $X>0$,
	\begin{equation}\label{Fujii's result}
		\begin{split}
			\sum_{0<\gamma\leqslant T}\zeta'(\rho)X^{\rho}={}& -\Delta(X)\frac{T}{2\pi}\left\{\left(\frac{1}{2}\log\frac{T}{2\pi}-\frac{1}{2}+\frac{\pi i}{4}\right)\log X-\sum_{mr=X}\Lambda(r)\log m\right\}\\
			& +X\sum_{m\leqslant\frac{T}{2\pi X}}e^{2\pi imX}\log^2 m+\frac{1}{2}X\log X\sum_{m\leqslant\frac{T}{2\pi X}}e^{2\pi imX}\log m\\
			& -\frac{1}{4}X\log X\left(2\log X-\pi i\right)\sum_{m\leqslant\frac{T}{2\pi X}}e^{2\pi imX}-X\sum_{mr\leqslant\frac{T}{2\pi X}}e^{2\pi imrX}\Lambda(r)\log m\\
			& +\mo\left(T^{\frac{1}{2}}\log^3 T\right)
		\end{split}
	\end{equation}
	with
	\begin{equation}\label{Delta(X) definition}
		\Delta(X)=\begin{cases}
			1,& \text{if }X\in\ZZ\\
			0,& \text{if }X\notin\ZZ
		\end{cases}
	\end{equation}
	and $\Lambda(r)$ denoting the von Mangoldt function defined by
	\[\Lambda(r)=\begin{cases}
		\log p,& \text{if }r=p^k\text{ for some prime }p\text{ and some integer }k\geqslant 1,\\
		0,& \text{otherwise}.
	\end{cases}\]
	However, there is a minor mistake in \eqref{JaMa's result}.
	
	In this paper, we shall revisit Garunk\v{s}tis and Steuding's work on the trivial $a$-points and also make a revision of Jakhlouti and Mazhouda's result \eqref{JaMa's result}. Motivated by considering \eqref{GaSt's result--1}, we shall generalize \eqref{JaMa's result} by deriving the asymptotics of
	\begin{equation}\label{our sum}
		S_T(a,\delta):=\sum_{\tau<\gamma_a\leqslant T}\zeta'(\rho_a+i\delta)X^{\rho_a},\quad T\to\infty,
	\end{equation}
	where $X>0$ and $\tau\geqslant|\delta|+1$ are fixed, and $0\ne\delta=\frac{2\pi\alpha}{\log\frac{T}{2\pi X}}\ll 1$.
	
	The advantage of considering \eqref{our sum} is that taking $a=0$ and differentiating it with respect to $\alpha$ can yield the work of Pearce-Crump \cite{PeCr} on the asymptotics of
	\[\sum_{0<\gamma\leqslant T}\zeta^{(n)}(\rho)X^{\rho},\quad T\to\infty.\]
	In particular, setting $X=1$ in the corresponding asymptotic expansion recovers \cite[Theorem 3]{HuPC}.

\section{\bf Statement of results}
    Throughout this paper, $a\in\CC$ and $X>0$ are assumed to be any fixed numbers. Let $\rho=\beta+i\gamma$ be a non-trivial zero of $\zeta(s)$ and let $\rho_a=\beta_a+i\gamma_a$ be any $a$-point of $\zeta(s)$. Write $s=\sigma+it$ with $\sigma,t\in\RR$, use the notation for the characteristic function
    \[\mathbbm{1}_{x\in \Omega}:=\begin{cases}
    	1,& \text{if }x\in\Omega\\
    	0,& \text{otherwise}
    \end{cases}\]
    and recall the function $\Delta(X)=\mathbbm{1}_{X\in\ZZ}$ defined in \eqref{Delta(X) definition}. Then we can state our main results.
    
    The first generalizes Fujii's result \eqref{Fujii's result}, since by letting $\alpha\to 0$, \eqref{our first result} reduces to \eqref{Fujii's result}.
    
    \begin{theorem}\label{Theorem: our first result}
    	Fix numbers $X>0$ and $\tau\geqslant|\delta|+1$. Then for $0\ne\delta:=\frac{2\pi\alpha}{\log\frac{T}{2\pi X}}\ll 1$, as $T\to\infty$,
    	\begin{align}
    		\sum_{\tau<\gamma\leqslant T}\zeta'(\rho+i\delta)X^{\rho}={}& -\Delta(X)\frac{T}{2\pi}\left\{X^{-i\delta}\log X\left(\frac{1}{2}\log\frac{T}{2\pi}-\frac{1}{2}+\frac{\pi i}{4}\right)-\sum_{mr=X}\Lambda(r)m^{-i\delta}\log m\right\} \nonumber\\
    		& -X^{1-i\delta}\log X\left(\frac{1}{2}\log X-\frac{\pi i}{4}\right)\sum_{m\leqslant\frac{T}{2\pi X}}e^{2\pi imX} \nonumber\\
    		& +X^{1-i\delta}\log X\left(\sum_{mr\leqslant\frac{T}{2\pi X}}e^{2\pi imrX}\Lambda(r)r^{-i\delta}-\frac{1}{2}\sum_{m\leqslant\frac{T}{2\pi X}}e^{2\pi imX}\log m\right) \nonumber\\
    		& +X^{1-i\delta}\sum_{mr\leqslant\frac{T}{2\pi X}}e^{2\pi imrX}\Lambda(r)r^{-i\delta}\log r+\mo\left(T^{\frac{1}{2}}\log^3 T\right), \label{our first result}
    	\end{align}
    	holds uniformly in $\alpha$.
    \end{theorem}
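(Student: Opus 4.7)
The plan is to adapt Fujii's contour-integration argument for \eqref{Fujii's result} in \cite{Fuji--2}, carrying the shift by $i\delta$ through the whole calculation. Set $c=1+1/\log T$ and let $\mathcal{R}$ be the positively oriented rectangle with vertices $1-c+i\tau$, $c+i\tau$, $c+iT$, $1-c+iT$, with $T$ perturbed by $\mo(1/\log T)$ so that no zero of $\zeta$ lies on $\mathcal{R}$. Since $\mathrm{Res}_{s=\rho}[\zeta'/\zeta(s)]=1$ (counted with multiplicity), the residue theorem gives
\[\frac{1}{2\pi i}\oint_{\mathcal{R}}\frac{\zeta'}{\zeta}(s)\,\zeta'(s+i\delta)\,X^s\,\md s \;=\; \sum_{\tau<\gamma\leqslant T}\zeta'(\rho+i\delta)X^\rho \;+\; R_1 \;+\; R_2,\]
where $R_1$ is the residue at the simple pole $s=1$ of $\zeta'/\zeta$ (yielding $-X\,\zeta'(1+i\delta)$), and $R_2$ is the residue at the double pole $s=1-i\delta$ of $\zeta'(\cdot+i\delta)$; expanding $\zeta'(s+i\delta)=-(s-(1-i\delta))^{-2}+\cdots$ one finds that $R_2$ contributes $X^{1-i\delta}\log X$ times $\zeta'/\zeta(1-i\delta)$ plus $X^{1-i\delta}$ times $(\zeta'/\zeta)'(1-i\delta)$.

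On the right edge $\mathrm{Re}(s)=c$ we substitute the Dirichlet series $\zeta'/\zeta(s)=-\sum_r\Lambda(r)r^{-s}$ and $\zeta'(s+i\delta)=-\sum_m\log m\cdot m^{-s-i\delta}$, exchange sum and integral, and apply a Perron-type truncation; this produces the finite double sum $\sum_{mr\leqslant T/(2\pi X)}\Lambda(r)\log m\cdot m^{-i\delta}$ weighted by $X^{1-i\delta}$, together with a diagonal contribution (nonzero only when $X\in\ZZ$) that accounts for the $\Delta(X)$-term of \eqref{our first result}. The horizontal edges at heights $\tau$ and $T$ are controlled by the classical estimates for $\zeta'/\zeta$ and $\zeta'$ on lines avoiding zeros; because $|\delta|\ll 1/\log T$, the shift is harmless and Fujii's bound $\mo(T^{1/2}\log^3 T)$ transfers directly.

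The principal step is the left edge $\mathrm{Re}(s)=1-c$, where we apply the functional equation in both factors,
\[\frac{\zeta'}{\zeta}(s) = \frac{\chi'}{\chi}(s) - \frac{\zeta'}{\zeta}(1-s), \qquad \zeta'(s+i\delta) = \chi'(s+i\delta)\zeta(1-s-i\delta) - \chi(s+i\delta)\zeta'(1-s-i\delta),\]
and expand $\zeta'/\zeta(1-s)$, $\zeta(1-s-i\delta)$, $\zeta'(1-s-i\delta)$ as Dirichlet series (valid since $\mathrm{Re}(1-s)=c>1$). Using the standard asymptotics $\chi(s)=(t/2\pi)^{1/2-s}e^{i(t+\pi/4)}(1+\mo(1/t))$ and $\chi'/\chi(s)=-\log(t/2\pi)+\mo(1/t)$, each resulting $t$-integral has phase $t(\log(2\pi mX/t)+1)$ with stationary point $t=2\pi mX$, active in the range $m\leqslant T/(2\pi X)$; a standard stationary-phase evaluation produces the exponential sums $\sum_{m\leqslant T/(2\pi X)}e^{2\pi imX}(\cdot)$ of \eqref{our first result}. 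The logarithmic factors $\log X$, $\log m$, $\log r$ arise from differentiated $\chi'/\chi$-factors and from the Dirichlet coefficients of $\zeta'(1-s-i\delta)$, while the shifts $m^{-i\delta}$, $r^{-i\delta}$ come directly from the $i\delta$ appearing in those series.

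The main obstacle is quantitative rather than conceptual. We must verify that the stationary-phase evaluation, combined with $R_1$, $R_2$, and the diagonal/off-diagonal contributions, collapses exactly to the form \eqref{our first result}, with an error $\mo(T^{1/2}\log^3 T)$ that is \emph{uniform in $\alpha$}. The distinctive new feature compared to Fujii is the systematic appearance of $X^{1-i\delta}$ in place of a bare $X$: this traces back to the ratio $\chi(s+i\delta)/\chi(s)\sim(t/2\pi)^{-i\delta}$, which at the stationary point $t=2\pi mX$ contributes $(mX)^{-i\delta}$, collapsing with the $m^{-i\delta}$ already carried by the Dirichlet coefficients into the clean factor $X^{-i\delta}$. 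Tracking this collapse cleanly, and in particular ensuring that the potentially large residue $R_1$ (of size $X/\delta^2\asymp X(\log T)^2$) cancels against a matching piece of the stationary-phase main term so that the uniform bound $\mo(T^{1/2}\log^3 T)$ survives, is the most delicate part of the argument.
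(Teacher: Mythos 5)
Your high-level plan — contour integral of $\tfrac{\zeta'}{\zeta}(s)\zeta'(s+i\delta)X^s$, Dirichlet series on the right edge, functional equation and a Gonek-type stationary-phase lemma on the left edge, crude estimates on the top and bottom — is conceptually the same as the paper's argument. The paper integrates $\tfrac{\xi'}{\xi}(s)\zeta'(s+i\delta)X^s$ instead; $\xi'/\xi$ has the same residues at the nontrivial zeros but no pole at $s=1$, satisfies the tidy functional equation $\xi'/\xi(1-s)=-\xi'/\xi(s)$, and admits the expansion $\xi'/\xi(s)=\tfrac12\log\tfrac{t}{2\pi}+\tfrac{\pi i}{4}+\tfrac{\zeta'}{\zeta}(s)+\mo(t^{-1})$, so the left-edge computation reduces cleanly to the Gonek/Pearce-Crump lemma (Lemma~\ref{Lemma: PeCr's lemma}) without any hand analysis of stationary points. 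Your ``stationary-phase'' paragraph is, in effect, a re-derivation of that lemma.

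The genuine gap is in your residue bookkeeping, and it invalidates the final paragraph of the proposal. The rectangle has bottom edge at height $t=\tau$, and the hypothesis $\tau\geqslant|\delta|+1$ is there precisely to keep the two poles you worry about \emph{outside} the contour: $s=1$ lies at $t=0$ and $s=1-i\delta$ lies at $t=-\delta$, and both satisfy $|t|\leqslant|\delta|<\tau$. Thus neither $R_1$ nor $R_2$ is picked up by the residue theorem; the contour integral equals the sum over zeros with $\tau<\gamma\leqslant T$ and nothing else, and the cancellation you flag as ``the most delicate part of the argument'' — a purported $R_1\asymp X/\delta^2\asymp X\log^2 T$ needing to cancel against a stationary-phase main term — is a non-issue. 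Choosing $\xi'/\xi$ as in the paper makes this moot from the start, but even with $\zeta'/\zeta$ (as you propose) the poles simply never enter the contour. You should remove $R_1$, $R_2$ from the residue identity and strike the final paragraph's discussion of cancellation; with that correction your sketch matches the structure of the paper's proof.
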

    
    As Pearce-Crump pointed out in \cite[p. 5]{PeCr}, it is clear from the following corollary that the changing behavior of asymptotic expansions \eqref{our first result} and \eqref{our second result} depends on whether $X$ is an integer.
    
    \begin{corollary}\label{Corollary: our second result}
    	Fix any positive integer $X$ and any number $\tau\geqslant|\delta|+1$. Then for $0\ne\delta:=\frac{2\pi\alpha}{\log\frac{T}{2\pi X}}\ll 1$, as $T\to\infty$,
    	\begin{align}
    		\sum_{\tau<\gamma\leqslant T}\zeta'(\rho+i\delta)X^{\rho}={}& \frac{T}{2\pi}\sum_{mr=X}\Lambda(r)m^{-i\delta}\log m-X^{-i\delta}\,\frac{T}{2\pi}\,Z_{\delta}\left(\frac{T}{2\pi X}\right) \nonumber\\
    		& -X^{-i\delta}\log X\ \frac{T}{2\pi}\left(\log\frac{T}{2\pi}-1+\frac{\zeta'}{\zeta}(1+i\delta)-\frac{\zeta(1-i\delta)}{1-i\delta}\left(\frac{T}{2\pi X}\right)^{-i\delta}\right) \nonumber\\
    		& +E(T) \label{our second result}
    	\end{align}
    	uniformly in $\alpha$, where
    	\[Z_{\delta}(x):=\zeta'(1+i\delta)+\frac{x^{-i\delta}}{1-i\delta}\left\{\frac{\zeta''\zeta-\zeta'^2}{\zeta^2}(1-i\delta)+\frac{\zeta'}{\zeta}(1-i\delta)\Bigl(\log x-\frac{1}{1-i\delta}\Bigr)\right\}\]
    	and the error term is
    	\begin{equation}\label{error term E(T)}
    		E(T)=\begin{cases}
    			\mo\bigl(Te^{-C\sqrt{\log T}}\bigr),& \text{unconditionally}\\
    			\mo\bigl(T^{\frac{1}{2}+\ve}\bigr),& \text{assuming Riemann Hypothesis}
    		\end{cases}
    	\end{equation}
    	with $C>0$ being an absolute constant.
    \end{corollary}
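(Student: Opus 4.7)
The plan is to specialize Theorem~\ref{Theorem: our first result} to $X\in\ZZ$ with $X\geqslant 1$, and then close-form the resulting prime sums via contour integration of their Dirichlet generating series. Since $X$ is a positive integer, $\Delta(X)=1$ and $e^{2\pi imX}=e^{2\pi imrX}=1$ for every integer pair $(m,r)$. Substituting these into \eqref{our first result} and setting $Y:=T/(2\pi X)$, the right-hand side collapses to the explicit divisor sum $\sum_{mr=X}\Lambda(r)m^{-i\delta}\log m$, the elementary quantity $\sum_{m\leqslant Y}\log m=Y\log Y-Y+\mo(\log Y)$ from Stirling, and the two arithmetic sums
\[
S_1(\delta):=\sum_{mr\leqslant Y}\Lambda(r)r^{-i\delta},\qquad S_2(\delta):=\sum_{mr\leqslant Y}\Lambda(r)r^{-i\delta}\log r.
\]

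The heart of the proof is the evaluation of $S_1$ and $S_2$. Their Dirichlet generating series are $-\zeta(s)(\zeta'/\zeta)(s+i\delta)$ and $\zeta(s)(\zeta'/\zeta)'(s+i\delta)$ respectively, so I would apply Perron's formula on the line $\Re s=1+\ve$ and shift the contour leftward across the critical strip. Two poles carry the main contributions: the simple pole of $\zeta(s)$ at $s=1$, and the pole of $\zeta(s+i\delta)$ at $s=1-i\delta$ (simple for $S_1$, double for $S_2$). The residues at $s=1$ produce the linear-in-$Y$ terms featuring $(\zeta'/\zeta)(1+i\delta)$ and its derivative; the residues at $s=1-i\delta$ produce $Y^{1-i\delta}$-contributions which, after multiplying by $X^{1-i\delta}$ and using $XY=T/(2\pi)$, assemble together with the Stirling and counting terms into the expression $Z_\delta\bigl(T/(2\pi X)\bigr)$ and the correction $\zeta(1-i\delta)/(1-i\delta)\cdot(T/(2\pi X))^{-i\delta}$ that appear in \eqref{our second result}. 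Standard contour bounds for $\zeta$, controlled by the Vinogradov--Korobov zero-free region unconditionally or by the Riemann Hypothesis, yield the error term $E(T)$ in \eqref{error term E(T)}.

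The main obstacle is the delicate behaviour near $\Re s=1$: the two poles at $s=1$ and $s=1-i\delta$ coalesce as $\delta\to 0$, while our hypothesis forces $\delta\asymp 1/\log T$. Individual residue contributions have size $\asymp 1/\delta\asymp\log T$ and are therefore large, so one must track the subtle cancellations between them --- in particular between the singular parts of $\zeta'(1+i\delta)$ and $(\zeta'/\zeta)'(1-i\delta)$, and between $(\zeta'/\zeta)(1+i\delta)$ and $\zeta(1-i\delta)/(1-i\delta)$ --- in order to reach the bounded closed form $Z_\delta$. Simultaneously, all shifted-contour error estimates must be made uniform in $\alpha$, preserving either the exponential saving (unconditionally) or the $T^{1/2+\ve}$ saving (under the Riemann Hypothesis).
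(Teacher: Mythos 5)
Your strategy is exactly the one the paper takes: specialize Theorem~\ref{Theorem: our first result} to integral $X$, reduce to the elementary Stirling sum and the two arithmetic sums $S_1$ and $S_2$, and close them via truncated Perron's formula and contour shifting, with the poles at $s=1$ and $s=1-i\delta$ supplying the main terms and the zero-free region (or RH) supplying $E(T)$. You have also correctly identified the Dirichlet generating function of $S_2=\sum_{mr\leqslant Y}\Lambda(r)r^{-i\delta}\log r$ as $\zeta(s)\left(\frac{\zeta'}{\zeta}\right)'(s+i\delta)$. This is where you part ways with the paper: in Section~\ref{Section: proof of second result} the paper computes the residues of $\frac{\zeta'}{\zeta}(s)\zeta'(s-i\delta)\frac{x^s}{s}$, i.e.\ the generating integrand for $\sum_{mr\leqslant x}\Lambda(r)m^{i\delta}\log m$, which is a \emph{different} arithmetic sum from the $L=\sum_{mr\leqslant x}\Lambda(r)r^{i\delta}\log r$ it claims to evaluate (the two are not related by renaming $m\leftrightarrow r$ because $\Lambda$ breaks that symmetry). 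Thus your residues at $s=1$ and $s=1-i\delta$ are, respectively, $\left(\frac{\zeta'}{\zeta}\right)'(1+i\delta)Y$ and $\frac{Y^{1-i\delta}}{1-i\delta}\left[\zeta'(1-i\delta)+\zeta(1-i\delta)\left(\log Y-\frac{1}{1-i\delta}\right)\right]$, which do \emph{not} reassemble into the $Z_\delta$ printed in~\eqref{our second result}. You should be aware that this is not a slip on your part: taking $X=1$ and $\delta\to0$ in your residues recovers precisely $\frac{Y}{2}\log^2Y+(C_0-1)Y\log Y+(1-C_0-C_0^2+3C_1)Y$, i.e.\ Fujii's expansion~\eqref{Fujii's result--0}, whereas the same limit of $-YZ_\delta(Y)$ with the paper's $Z_\delta$ gives $\frac{Y}{2}\log^2Y-(C_0+1)Y\log Y+(1+C_0+C_0^2-3C_1)Y$, which fails the check. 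So your route is sound; carried out to completion it would produce a corrected $Z_\delta$, namely $Z_\delta(x)=-\left(\frac{\zeta'}{\zeta}\right)'(1+i\delta)-\frac{x^{-i\delta}}{1-i\delta}\left[\zeta'(1-i\delta)+\zeta(1-i\delta)\left(\log x-\frac{1}{1-i\delta}\right)\right]$, rather than the one the corollary states. The terms proportional to $\log X$ (governed by $S_1$, Stirling, and the counting sum) you would obtain exactly as in~\eqref{our second result}, since your Perron evaluation of $S_1$ agrees with Fujii's estimate~\eqref{Fujii's result--1}. Keep the remark about uniformity in $\alpha$ and the coalescing-pole cancellation explicit in the write-up, since the individual residues are of size $\delta^{-2}\asymp\log^2 T$ and only their sum is bounded.
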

    
    Our second theorem corrects and improves \eqref{JaMa's result}.
    
    \begin{theorem}\label{Theorem: our third result}
    	Fix numbers $a\in\CC,\,X>0$ and $\tau\geqslant |\delta|+1$. Then for $0\ne\delta:=\frac{2\pi\alpha}{\log\frac{T}{2\pi X}}\ll 1$, as $T\to\infty$,
    	\begin{equation}\label{our third result}
    		\begin{split}
    			\sum_{\tau<\gamma_a\leqslant T}\zeta'(\rho_a+i\delta)X^{\rho_a}={}& \sum_{\tau<\gamma\leqslant T}\zeta'(\rho+i\delta)X^{\rho}-\Delta(X)\frac{T}{2\pi}\sum_{mr=X}\Bigl(\Lambda(r)+c_a(r)\Bigr)m^{-i\delta}\log m\\
    			& -aK_{\delta}^{(1)}\left(\frac{T}{2\pi}\right)+\mo\left(T^{\frac{1}{2}}\log^7 T\right)
    		\end{split}
    	\end{equation}
    	uniformly in $\alpha$, where $c_a(1)=0$, the coefficients $c_a(r)\,(r\geqslant 2)$ defined by \cite[Equation (29)]{Steu}
    	\begin{equation}\label{definition of c_a(r)}
    		\frac{\zeta'(s)}{\zeta(s)-a}=\sum_{r\geqslant 2}\frac{c_a(r)}{r^s},\quad \Re(s)>1
    	\end{equation}
    	and
    	\begin{align*}
    		K_{\delta}^{(1)}(x):={}& \Delta\left(X^{-1}\right)\sum_{mn=\frac{1}{X}}\mu(m)n^{i\delta}\cdot x^{1-i\delta}\left(\frac{\log^2 x}{1-i\delta}-\frac{2\log x}{\left(1-i\delta\right)^2}+\frac{2}{\left(1-i\delta\right)^3}\right)\\
    		& +\Delta\left(X^{-1}\right)\sum_{mnr=\frac{1}{X}}\Lambda(m)\left(m^{i\delta}+1\right)n^{i\delta}\mu(r)\cdot x^{1-i\delta}\left(\frac{\log x}{1-i\delta}-\frac{1}{\left(1-i\delta\right)^2}\right)\\
    		& +\Delta\left(X^{-1}\right)\sum_{mnr\ell=\frac{1}{X}}\Lambda(m)\left(mn\right)^{i\delta}\mu(r)\Lambda(\ell)\cdot x^{1-i\delta}\left(1-i\delta\right)^{-1}
    	\end{align*}
    	with $\mu(r)$ denoting the M\"obius function. If, further, $X$ is a positive integer, then \eqref{our third result} reduces to
    	\begin{equation}\label{our third result--X integers}
    		\begin{split}
    			\sum_{\tau<\gamma_a\leqslant T}\zeta'(\rho_a+i\delta)X^{\rho_a}={}& \sum_{\tau<\gamma\leqslant T}\zeta'(\rho+i\delta)X^{\rho}-\frac{T}{2\pi}\sum_{mr=X}\Bigl(\Lambda(r)+c_a(r)\Bigr)m^{-i\delta}\log m\\
    			& -\mathbbm{1}_{X=1}\cdot aK_{\delta}^{(2)}\left(\frac{T}{2\pi}\right)+\mo\left(T^{\frac{1}{2}}\log^7 T\right),
    		\end{split}
    	\end{equation}
    	where
    	\[K_{\delta}^{(2)}(x):=x^{1-i\delta}\left(\frac{\log^2 x}{1-i\delta}-\frac{2\log x}{\left(1-i\delta\right)^2}+\frac{2}{\left(1-i\delta\right)^3}\right).\]
    \end{theorem}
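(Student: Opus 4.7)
The plan is to apply the argument principle in order to detect the difference between the $a$-point sum and the ordinary zero sum via a single contour integral. Using the identity
\[
\frac{a\zeta'(s)}{\zeta(s)(\zeta(s)-a)} = \frac{\zeta'(s)}{\zeta(s)-a} - \frac{\zeta'(s)}{\zeta(s)},
\]
this kernel has residue $+1$ at each (generically simple) $a$-point $\rho_a$ and residue $-1$ at each simple zero $\rho$ of $\zeta(s)$. Taking a rectangular contour $\mathcal{C}$ with vertices $c+i\tau$, $c+iT$, $1-c+iT$, $1-c+i\tau$, where $c := 1 + 1/\log T$ and $T$ is adjusted so that the horizontal sides avoid zeros of $\zeta(s)(\zeta(s)-a)$, the residue theorem gives
\[
\sum_{\tau<\gamma_a\leqslant T}\zeta'(\rho_a+i\delta)X^{\rho_a} - \sum_{\tau<\gamma\leqslant T}\zeta'(\rho+i\delta)X^{\rho} = \frac{1}{2\pi i}\oint_{\mathcal{C}} \zeta'(s+i\delta)\,X^s\,\frac{a\zeta'(s)}{\zeta(s)(\zeta(s)-a)}\,ds,
\]
and the proof reduces to evaluating this contour integral edge by edge.

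On the right edge $\sigma = c$, both $\zeta'(s+i\delta) = -\sum_m m^{-(s+i\delta)}\log m$ and, by \eqref{definition of c_a(r)} together with $-\zeta'/\zeta = \sum\Lambda(n)n^{-s}$, the kernel $\sum_{r\geqslant 2}\bigl(\Lambda(r)+c_a(r)\bigr)r^{-s}$ are absolutely convergent Dirichlet series. Interchanging sum and integral, the $O(T)$-sized contribution comes from the diagonal $mr = X$ and yields
\[
-\Delta(X)\,\frac{T}{2\pi}\sum_{mr=X}\bigl(\Lambda(r)+c_a(r)\bigr)m^{-i\delta}\log m + \mo(1),
\]
which is the first explicit correction in \eqref{our third result}. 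The off-diagonal terms $mr \neq X$ become Perron-type integrals whose size is controlled by Steuding's estimates on $c_a(r)$ from \cite{Steu} together with standard mean-value bounds for $\zeta'(s+i\delta)$ on $\sigma = c$; they absorb into $\mo(T^{1/2}\log^7 T)$. The horizontal pieces $t = \tau, T$ are bounded by the customary estimates for $1/\zeta(s)$, $1/(\zeta(s)-a)$ and $\zeta'(s+i\delta)$ inside the critical strip, as in Fujii \cite{Fuji--2} and Jakhlouti--Mazhouda \cite{JaMa}.

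The substantive part of the argument is the left edge $\sigma = 1-c$, where no Dirichlet expansion converges. I apply the functional equation $\zeta(s) = \chi(s)\zeta(1-s)$ (and its differentiated form for $\zeta'$) to rewrite $\zeta'(s+i\delta)$ and $\zeta'(s)/\zeta(s)$ in terms of series in the dual variable $1-s$, and exploit that $|\chi(s)|$ is large on this line to expand
\[
\frac{1}{\zeta(s)-a} = \frac{1}{\chi(s)\zeta(1-s)}\sum_{k\geqslant 0}\frac{a^k}{\bigl(\chi(s)\zeta(1-s)\bigr)^k}.
\]
Only the low-order terms ($k = 0$ and, through the overall factor $a$ in the kernel, effectively the $k = 1$ part) contribute to size $T$; the higher-$k$ pieces carry extra negative powers of $\chi$ and are absorbed into the error. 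The change of variable $s \mapsto 1-s$ then transforms the left-edge integral into Mellin--Perron integrals on $\Re(s) = c$ with $X^s$ replaced by $X^{1-s}$, so the surviving diagonal condition reads (product of dummy indices) $= 1/X$, forcing the factor $\Delta(X^{-1})$ in $K_{\delta}^{(1)}$. The three explicit divisor sums defining $K_{\delta}^{(1)}$ correspond to the three Dirichlet convolutions (of $\mu$, $\Lambda$, and $n^{i\delta}$ at varying multiplicities) arising from the $1/\zeta$, $\zeta'/\zeta^2$, and $(\zeta')^2/\zeta^3$ pieces of the expanded integrand; the weights $x^{1-i\delta}(\log x)^k/(1-i\delta)^j$ are produced by integrating the resulting $(T/(2\pi X))^{1-s-i\delta}$-type Mellin kernels against $\log$-powers.

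I expect the main obstacle to be precisely this left-edge bookkeeping: identifying the three surviving divisor sums, tracking the $(1-i\delta)^{-j}$ factors together with the matching powers of $\log(T/(2\pi))$, and verifying that the discarded higher-$k$ pieces together with all off-diagonal remainders really lie in $\mo(T^{1/2}\log^7 T)$. Once \eqref{our third result} is established, the reduction \eqref{our third result--X integers} for positive integer $X$ is immediate: $\Delta(X^{-1}) = \mathbbm{1}_{X=1}$ forces $K_{\delta}^{(1)}\equiv 0$ except at $X = 1$, where the divisor sums each collapse to a single term and $K_{\delta}^{(1)}$ reduces to $K_{\delta}^{(2)}$.
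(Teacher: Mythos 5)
Your reorganization into a single contour for the kernel
\[
\frac{a\,\zeta'(s)}{\zeta(s)\bigl(\zeta(s)-a\bigr)}
=\frac{\zeta'(s)}{\zeta(s)-a}-\frac{\zeta'(s)}{\zeta(s)}
\]
is a clean idea and it correctly isolates the difference of the two sums via the residue theorem; your description of what each edge should produce (right edge giving the $\Lambda(r)+c_a(r)$ divisor sum, left edge, after the functional-equation expansion, giving the $\Delta(X^{-1})$ terms of $K_\delta^{(1)}$) matches the shape of the paper's answer. The paper instead evaluates $\frac{1}{2\pi i}\int_{\mathcal{R}}\frac{\zeta'(s)}{\zeta(s)-a}\zeta'(s+i\delta)X^s\,\md s$ directly, expands $\frac{1}{\zeta(s)-a}=\frac{1}{\zeta(s)}\bigl(1+\frac{a}{\zeta(s)}+\cdots\bigr)$ on the left edge, and recovers the zero sum as the $K_1$ piece by a second application of Cauchy's theorem. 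Your one-contour variant would, in principle, avoid that inner contour.

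However, there is a genuine gap in the contour you chose. You place the right edge at $\sigma=c=1+\frac{1}{\log T}$. The nontrivial zeros of $\zeta$ all satisfy $0<\beta<1$, but for $a\ne 0,1$ the $a$-points do \emph{not}: there is a constant $\sigma_a>1$ (depending on $a$, and typically strictly larger than $1+\frac{1}{\log T}$) such that $a$-points exist throughout $1<\beta_a<\sigma_a$. Your rectangle therefore omits every $a$-point with $\beta_a>1+\frac{1}{\log T}$, so the residue count on the left-hand side of your identity is wrong, and the claimed formula for the difference of the two sums does not hold. This is precisely why the paper takes an asymmetric rectangle $\mathcal{R}$ with right edge at $\sigma=B=\frac{\log T}{2\log(X+2)}$ (any fixed abscissa beyond $\sigma_a$ would also do), while keeping the left edge at $\sigma=1-b=-\frac{1}{\log T}$ so as not to swallow trivial zeros and trivial $a$-points. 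If you push your right edge out to a fixed $\sigma=c'>\sigma_a$ but symmetrically drop the left edge to $1-c'<-1$, you pick up residues at the trivial zero $s=-2$ and at nearby trivial $a$-points, so the asymmetry is essential.

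A second, related weakness: on $\Re(s)=1+\frac{1}{\log T}$ the Dirichlet series you invoke are only barely (if at all) absolutely convergent. Already $\sum_m(\log m)\,m^{-c}\asymp\log^2 T$, and the absolute-convergence abscissa of $\sum_r c_a(r)r^{-s}$ is not $1$ in general (for $a\ne 0$ the coefficients $c_a(r)$ do not have the clean $\ll r^{\varepsilon}$ bound you implicitly need; indeed the paper only uses the truncated series on the far right edge $\sigma=B$, where convergence is trivial). So the assertion that the right-edge off-diagonal terms ``absorb into $\mo(T^{1/2}\log^7 T)$ by standard mean-value bounds'' is not justified at $\sigma=1+\frac{1}{\log T}$; the paper's choice of a large right abscissa is what makes this step routine. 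Finally, the case $a=1$ requires special care (the paper replaces $\zeta(s)-1$ by $2^s(\zeta(s)-1)$ to restore a genuine Dirichlet series); your kernel partially absorbs this, but you should note that $\frac{\zeta'(s)}{\zeta(s)-1}\to-\log 2\ne 0$ as $\sigma\to\infty$, so the right-edge expansion $\sum_{r\geqslant 2}(\Lambda(r)+c_a(r))r^{-s}$ as stated breaks for $a=1$.
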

    
    Letting $\delta\to 0$ in \eqref{our third result--X integers} and using \eqref{Fujii's result--0}, one may get the following corollary, which provides the correct statement of \eqref{JaMa's result}.
    
    \begin{corollary}
    	Fix any number $a\in\CC$ and any positive integer $X$. Then
    	\begin{align*}
    		\sum_{1<\gamma_a\leqslant T}\zeta'(\rho_a)X^{\rho_a}={}& \Bigl(\frac{1}{2}-a\cdot\mathbbm{1}_{X=1}\Bigr)\frac{T}{2\pi}\log^2\left(\frac{T}{2\pi}\right)+\bigl(C_0-1-\log X-2a\cdot\mathbbm{1}_{X=1}\bigr)\frac{T}{2\pi}\log\frac{T}{2\pi}\\
    		& +\Biggl\{1-C_0-C_0^2+3C_1-2a\cdot\mathbbm{1}_{X=1}\\
    		& \qquad\ \, -\sum_{mr=X}c_a(r)\log m-\Bigl(C_0-1+\frac{1}{2}\log X\Bigr)\log X\Biggr\}\frac{T}{2\pi}+E(T),
    	\end{align*}
    	where the error term $E(T)$ is given by \eqref{error term E(T)}, the coefficients $c_a(r)$ are defined in \eqref{definition of c_a(r)} and the constants $C_0$ and $C_1$ are the coefficitents in the Laurent expansion of $\zeta(s)$ at $s=1$, namely
    	\begin{equation}\label{Laurent expansion for zeta at 1}
    		\zeta(s)=\frac{1}{s-1}+C_0+C_1(s-1)+\cdots.
    	\end{equation}
    \end{corollary}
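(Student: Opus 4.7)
The plan is to take $\delta\to 0$ (equivalently $\alpha\to 0$) in~\eqref{our third result--X integers}, which is the integer-$X$ specialization of Theorem~\ref{Theorem: our third result}. The fixed lower cut-off $\tau\geq|\delta|+1$ can be replaced by $1$ without loss: by the Riemann--von Mangoldt-type formula~\eqref{Riemann-von Mangoldt formula}, the number of $a$-points $\rho_a$ with $1<\gamma_a\leq\tau$ is $O(1)$, and each term $\zeta'(\rho_a)X^{\rho_a}$ is bounded on that finite set, so the truncation difference is absorbed into $E(T)$; the same holds for the non-$a$-point sum $\sum_{\tau<\gamma\leq T}$ on the right.

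Three of the four pieces of~\eqref{our third result--X integers} specialize at $\delta=0$ in a routine way. The finite Dirichlet convolution $\tfrac{T}{2\pi}\sum_{mr=X}(\Lambda(r)+c_a(r))m^{-i\delta}\log m$ tends to $\tfrac{T}{2\pi}\sum_{mr=X}(\Lambda(r)+c_a(r))\log m$; the factor $K_\delta^{(2)}(T/(2\pi))$ evaluates at $\delta=0$ to $(T/(2\pi))\bigl(\log^2(T/(2\pi))-2\log(T/(2\pi))+2\bigr)$, producing the $\mathbbm{1}_{X=1}$ contributions announced in the corollary; and the residual $\mo(T^{1/2}\log^7 T)$ is dominated by $E(T)$. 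The nontrivial step is to evaluate $\lim_{\delta\to 0}\sum_{\tau<\gamma\leq T}\zeta'(\rho+i\delta)X^\rho$. For $X=1$ this is precisely Fujii's formula~\eqref{Fujii's result--0}. For integer $X\geq 2$, the required $X$-integer Fujii-type expansion is obtained by letting $\delta\to 0$ in Corollary~\ref{Corollary: our second result}.

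The main obstacle is precisely this last limit, because several factors in~\eqref{our second result}---notably $\zeta'/\zeta(1+i\delta)$, the derivative $(\zeta''\zeta-\zeta'^2)/\zeta^2(1-i\delta)$ that sits inside $Z_\delta$, and $\zeta(1-i\delta)/(1-i\delta)$---carry poles in $\delta$ of order one or two. The plan is to expand each of them using the Laurent data~\eqref{Laurent expansion for zeta at 1}; for instance
\[
\frac{\zeta'}{\zeta}(1+i\delta)=-\frac{1}{i\delta}+C_0+(2C_1-C_0^2)\,i\delta+O(\delta^2),
\]
with analogous expansions for the other singular factors and for $X^{-i\delta}$ and $(T/(2\pi X))^{-i\delta}$, and then to verify that the polar parts cancel, leaving a polynomial of degree two in $\log(T/(2\pi))$ whose coefficients involve $C_0$, $C_1$, and $\log X$.

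Once this integer-$X$ Fujii expansion is in hand, the conclusion follows by routine bookkeeping: combine the Fujii main term with the Dirichlet-sum and $K_0^{(2)}$ contributions, merge the $\Lambda(r)\log m$ portion of the middle sum with the Fujii contribution coming from~\eqref{Fujii's result}, and collect the coefficients of $\log^2(T/(2\pi))$, $\log(T/(2\pi))$, and the constant against $T/(2\pi)$ to obtain the stated expression; the $c_a(r)\log m$ remainder together with the extra $-(C_0-1+\tfrac12\log X)\log X$ term fall out as the residual constant coefficient. The only delicate point throughout is the pole-cancellation step, which must be done to second order in $\delta$ to see the correct constant term emerge.
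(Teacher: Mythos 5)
Your plan matches the paper's own (one-line) proof: set $\delta\to 0$ in \eqref{our third result--X integers}, feed in a Fujii-type expansion for $\sum\zeta'(\rho)X^\rho$, and collect coefficients; for $X=1$ the needed input is precisely \eqref{Fujii's result--0}, and for $X\geq 2$ you correctly observe that the $\delta\to 0$ limit of Corollary \ref{Corollary: our second result} supplies the integer-$X$ analogue, which is actually more careful than the paper's sketch (it cites only \eqref{Fujii's result--0}). Your Laurent expansion of $\tfrac{\zeta'}{\zeta}(1+i\delta)$ is correct, the pole-cancellation strategy is the right idea, and the bound $T^{1/2}\log^7T=\mo(E(T))$ holds both unconditionally and under RH, so the error bookkeeping works.

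One point worth flagging if you carry the computation through: specializing to $X=1$, the $K_0^{(2)}(T/2\pi)$ term contributes $-a\tfrac{T}{2\pi}\bigl(\log^2\tfrac{T}{2\pi}-2\log\tfrac{T}{2\pi}+2\bigr)$, i.e.\ the coefficient of $\tfrac{T}{2\pi}\log\tfrac{T}{2\pi}$ should pick up $+2a\cdot\mathbbm{1}_{X=1}$ (consistent with combining \eqref{GaSt's result} and \eqref{Fujii's result--0}), whereas the corollary as printed has $-2a\cdot\mathbbm{1}_{X=1}$ there; your route, executed correctly, will surface this sign discrepancy.
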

	
\section{\bf Overview of the paper}
    In the first two sections we have described the motivation and main results of this paper.
    
    In Section \ref{Section: preliminaries}, we will present some basic results about the Riemann zeta function $\zeta(s)$ and the Riemann xi function $\xi(s)$. We will also make a revision of Garunk\v{s}tis and Steuding's results \cite[Lemmas 4 and 6]{GaSt}.
    
    In Section \ref{Section: proof of first result}, we will use the results from Section \ref{Section: preliminaries} to prove Theorem \ref{Theorem: our first result}. Our starting point is the contour integral
    \[\frac{1}{2\pi i}\int_{\mathcal{C}}\frac{\xi'}{\xi}(s)\zeta'(s+i\delta)X^s\md s,\]
    where $b=1+\frac{1}{\log T},\,\tau\geqslant|\delta|+1$, and $\mathcal{C}$ denotes the postively oriented rectangle with the vertices $b+i\tau,b+iT,1-b+iT,1-b+i\tau$.
    
    In Section \ref{Section: proof of second result}, we will prove Corollary \ref{Corollary: our second result} by evaluating the leading terms.
    
    In Section \ref{Section: proof of third result}, we will carefully evaluate the contour integral
    \[\frac{1}{2\pi i}\int_{\mathcal{R}}\frac{\zeta'(s)}{\zeta(s)-a}\zeta'(s+i\delta)X^s\md s\]
    to prove Theorem \ref{Theorem: our third result}, where $B\asymp\log T,\,b=1+\frac{1}{\log T},\,\tau\geqslant|\delta|+1$, and $\mathcal{R}$ denotes the positively oriented rectangle with the vertices $B+i\tau,B+iT,1-b+iT,1-b+i\tau$.
    
    The paper concludes with some remarks in Section \ref{Section: further remarks} on our main results.
	
\section{\bf Preliminaries}\label{Section: preliminaries}
	In this section, we present some basic results about $\zeta(s)$ and $\xi(s)$ and revisit Garunk\v{s}tis and Steuding's results in \cite{GaSt} about the trivial $a$-points.
	
	We first recall the functional equation for $\zeta(s)$
	\begin{equation}\label{functional equation for zeta}
		\zeta(1-s)=\chi(1-s)\zeta(s),
	\end{equation}
	where
	\[\chi(s)=2^s\pi^{s-1}\sin\Bigl(\frac{\pi s}{2}\Bigr)\Gamma(1-s)\]
	has the leading behavior \cite[Equation (13)]{Gone}
	\begin{equation}\label{chi -- leading behavior}
		\chi(1-s)=e^{-\frac{\pi}{4}i}\left(\frac{t}{2\pi}\right)^{\sigma-\frac{1}{2}}\exp\left(it\log\frac{t}{2\pi e}\right)\left(1+\mo\left(t^{-1}\right)\right)
	\end{equation}
	for $\sigma$ fixed and $t\geqslant 1$, and where $\Gamma(s)$ is the Gamma function.
	
	Taking the logarithmic derivative of \eqref{functional equation for zeta} and using \cite[Equation (33)]{Gone}
	\[\frac{\chi'}{\chi}(1-s)=-\log\frac{|t|}{2\pi}+\mo\bigl(\left|t\right|^{-1}\bigr)\]
	imply that for $\sigma$ fixed and $|t|\geqslant t_0>0$,
	\begin{equation}\label{functional equation for zeta'/zeta}
		\frac{\zeta'}{\zeta}(1-s)=-\log\frac{|t|}{2\pi}-\frac{\zeta'}{\zeta}(s)+\mo\bigl(\left|t\right|^{-1}\bigr).
	\end{equation}
	
	It should be mentioned that for $\sigma>1$, the following Dirichlet series are convergent:
	\begin{equation}\label{Dirichlet series for zeta-functions}
		\frac{1}{\zeta(s)}=\sum_{k=1}^{\infty}\frac{\mu(k)}{k^s},\ \ \zeta^{(n)}(s)=\left(-1\right)^n\sum_{m=1}^{\infty}\frac{\log^n m}{m^s},\ \ \frac{\zeta'}{\zeta}(s)=-\sum_{r=1}^{\infty}\frac{\Lambda(r)}{r^s}.
	\end{equation}
	We also need the following lemmas.
	\begin{lemma}[{\cite[Lemma 1]{HuPC}}]\label{Lemma: convexity bound for zeta}
		For $t\geqslant t_0>0$ uniformly in $\sigma$,
		\[\zeta^{(n)}(\sigma+it)\ll\begin{cases}
			t^{\frac{1}{2}-\sigma}\log^{n+1}t,& \text{if }\sigma\leqslant 0,\\
			t^{\frac{1}{2}(1-\sigma)}\log^{n+1}t,& \text{if }0\leqslant\sigma\leqslant 1,\\
			\log^{n+1}t,& \text{if }\sigma\geqslant 1.
		\end{cases}\]
	\end{lemma}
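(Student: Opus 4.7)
The plan is to reduce the bound for $\zeta^{(n)}(s)$ to the classical convexity bound for $\zeta(s)$ itself via Cauchy's integral formula. Concretely, for $s=\sigma+it$ with $t\geqslant t_0$, I would write
\[\zeta^{(n)}(s)=\frac{n!}{2\pi i}\oint_{|w-s|=r}\frac{\zeta(w)}{(w-s)^{n+1}}\md w,\qquad r=\frac{1}{\log t},\]
which gives $|\zeta^{(n)}(s)|\leqslant n!\log^n t\cdot\max_{|w-s|=r}|\zeta(w)|$. Because $t^{1/\log t}=e$, both $t$ and $\sigma$ vary only by bounded factors on this circle, so after this reduction it suffices to prove the analogous estimate for $\zeta$ itself with $\log^{n+1}t$ replaced by $\log t$.

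For $\sigma\geqslant 1$, the classical bound $\zeta(\sigma+it)\ll\log t$ follows from truncating the Dirichlet series in \eqref{Dirichlet series for zeta-functions} at height roughly $t$ and applying partial summation. For $\sigma\leqslant 0$, I would combine the functional equation \eqref{functional equation for zeta} with the asymptotic \eqref{chi -- leading behavior} to obtain $|\chi(s)|\asymp t^{1/2-\sigma}$; since $\mathrm{Re}(1-s)\geqslant 1$, the previous case supplies $|\zeta(1-s)|\ll\log t$, so that $\zeta(\sigma+it)\ll t^{1/2-\sigma}\log t$. For the critical strip $0\leqslant\sigma\leqslant 1$, I would interpolate via the Phragm\'{e}n--Lindel\"{o}f principle applied to $(s-1)\zeta(s)$ on the strip, using the two endpoint bounds just established, to obtain the convexity estimate $\zeta(\sigma+it)\ll t^{(1-\sigma)/2}\log t$.

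Combining these three estimates with the Cauchy inequality above delivers the three cases of the lemma. The main technical obstacle is ensuring uniformity in $\sigma$: one must verify that the circle $|w-s|=1/\log t$ never crosses into a region with a qualitatively different estimate, which is the case because the shift $\sigma\mapsto\sigma\pm 1/\log t$ and $t\mapsto t(1+\mo(1/\log t))$ alters each of the bounds only by a bounded factor. The pole of $\zeta$ at $s=1$ causes no trouble since $t\geqslant t_0>0$ keeps the circle away from $1$.
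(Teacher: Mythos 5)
The paper cites this lemma from \cite[Lemma 1]{HuPC} without reproducing a proof, so there is no in-text argument to compare against. Your proposal is correct and is the standard route, which is almost certainly how Hughes and Pearce-Crump prove it: reduce to the case $n=0$ via Cauchy's integral formula on a circle of radius $1/\log t$, at the cost of a factor $n!\log^n t$, and then invoke the classical $\log t$-sharpened convexity bound for $\zeta$ itself --- Dirichlet series for $\sigma\geqslant 1$, functional equation together with the asymptotic \eqref{chi -- leading behavior} for $\sigma\leqslant 0$, and Phragm\'en--Lindel\"of inside the critical strip. Your uniformity remark about the circle not crossing into a qualitatively different regime is exactly the point that needs to be checked, and your resolution (the bounds at adjacent $\sigma$-ranges match at the junctions $\sigma=0,1$, and a shift of $O(1/\log t)$ in $\sigma$ or $t$ changes each bound by a bounded factor) is correct.

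Two small points you should spell out if you want a complete argument. First, the radius $1/\log t$ only makes sense once $t$ is bounded below by some fixed $t_0$ with, say, $t_0\geqslant 2$: then $1/\log t\leqslant 1/\log 2<t$, so the circle genuinely avoids $s=1$ and stays in the half-plane $t>0$; for $t$ in any fixed bounded range away from $0$ the estimates are immediate. Second, Phragm\'en--Lindel\"of does not by itself carry the $\log t$ factor; one should apply the principle to $(s-1)\zeta(s)/\log(s+2)$ (or use the variant that allows endpoint majorants of the form $Ct^{\alpha}\log t$). Both are routine, so I regard the proposal as essentially complete and in line with the cited source's approach.
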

	
	\begin{lemma}[{\cite[Lemma 2]{HuPC}}]\label{Lemma: upper bound of zeta'/zeta}
		If $T$ is such that $|T-\gamma|\gg\frac{1}{\log T}$ for any ordinate $\gamma$, uniformly for $-1\leqslant \sigma\leqslant 2$ we have
		\[\left(\frac{\zeta'}{\zeta}\right)^{(n)}(\sigma+iT)\ll\log^{n+2}T.\]
	\end{lemma}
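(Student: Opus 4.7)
The plan is to proceed from the Hadamard product for $\xi(s)$, which gives the partial fraction expansion
\[
\frac{\zeta'}{\zeta}(s) = \sum_{\rho}\left(\frac{1}{s-\rho}+\frac{1}{\rho}\right) + G(s),
\]
where $G(s)$ collects the contributions of the pole at $s=1$, the Gamma factors from the functional equation, and certain absolute constants. Differentiating $n$ times annihilates the $1/\rho$ terms and yields
\[
\left(\frac{\zeta'}{\zeta}\right)^{(n)}(s) = (-1)^{n}\,n!\sum_{\rho}\frac{1}{(s-\rho)^{n+1}} + G^{(n)}(s).
\]
A standard application of Stirling's formula shows that $G^{(n)}(\sigma+iT)\ll\log T$ uniformly on the strip $-1\leqslant\sigma\leqslant 2$ (indeed $\mo(1)$ for $n\geqslant 1$), so the entire problem reduces to bounding $\sum_{\rho}|s-\rho|^{-(n+1)}$.

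Next I would split this sum according to the vertical proximity of $\gamma$ to $T$. For the near zeros with $|\gamma-T|\leqslant 1$, the Riemann--von Mangoldt formula gives $N(T+1)-N(T-1)\ll\log T$, so there are $\mo(\log T)$ contributing terms; each is at most $|T-\gamma|^{-(n+1)}\ll(\log T)^{n+1}$ by the separation hypothesis $|T-\gamma|\gg 1/\log T$, producing the dominant total of $\mo(\log^{n+2} T)$. For the far zeros with $|\gamma-T|>1$, I would group them into unit vertical intervals $[T+k,T+k+1]$ with $k\in\ZZ\sm\{0\}$; each interval contains $\ll\log(|k|+T)$ zeros and each summand is $\ll|k|^{-(n+1)}$, so
\[
\sum_{|\gamma-T|>1}\frac{1}{|s-\rho|^{n+1}}\ll\log T\sum_{k\geqslant 1}\frac{1}{k^{n+1}}\ll\log T,
\]
which is absorbed into the error $\mo(\log^{n+2}T)$.

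The key obstacle is a conceptual rather than a computational one: it is the handling of the near-zero contribution. The exponent $n+2$ in the final bound is exactly the price paid for tolerating the worst-case separation $|T-\gamma|\gg 1/\log T$, and the argument genuinely fails without that assumption (a single zero too close to the line $\Im s=T$ could ruin the estimate). Once this separation is in force, combining the near and far contributions with the Stirling-type bound on $G^{(n)}$ yields the stated estimate $(\zeta'/\zeta)^{(n)}(\sigma+iT)\ll\log^{n+2}T$ uniformly in $-1\leqslant\sigma\leqslant 2$, which is sharp up to the power of the logarithm.
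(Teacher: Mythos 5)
The paper does not prove this lemma; it cites it from Hughes and Pearce-Crump \cite{HuPC}, so there is no internal proof to compare against. Your approach -- Hadamard partial fractions, differentiate, split near/far zeros -- is the natural one and is essentially correct for $n\geqslant 1$. There is, however, a genuine gap at $n=0$ and a smaller sloppiness in the far-zero estimate.

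For $n=0$ your displayed formula $\left(\zeta'/\zeta\right)(s)=\sum_\rho (s-\rho)^{-1}+G(s)$ is false as a literal statement: the sum $\sum_\rho |s-\rho|^{-1}$ diverges, and the correct expansion retains the compensating $1/\rho$ terms. Your remark that "differentiating $n$ times annihilates the $1/\rho$ terms" is accurate only for $n\geqslant 1$; for $n=0$ you must keep them, and then the far-zero contribution needs the pairing $\tfrac{1}{s-\rho}+\tfrac{1}{\rho}=\tfrac{s}{\rho(s-\rho)}$ (or, more cleanly, the classical subtraction $\frac{\zeta'}{\zeta}(\sigma+iT)-\frac{\zeta'}{\zeta}(2+iT)=\sum_\rho\bigl(\tfrac{1}{\sigma+iT-\rho}-\tfrac{1}{2+iT-\rho}\bigr)+\mo(1)$) to produce an absolutely convergent sum. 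As written, your far-zero bound $\sum_{k\geqslant 1}k^{-(n+1)}$ diverges when $n=0$, so the $n=0$ case is not actually handled. Separately, even for $n\geqslant 1$, the number of zeros with ordinate in $[T+k,T+k+1]$ is $\ll\log(T+|k|)$, not uniformly $\ll\log T$; the sum $\sum_{k\geqslant1}\log(T+k)/k^{n+1}$ is still $\ll\log T$ once you split at $k=T$, but the inequality you display skips this step. With these two repairs -- restoring the $1/\rho$ terms and treating $n=0$ via the subtraction trick, and correcting the zero count per interval -- the argument goes through and gives the claimed $\ll\log^{n+2}T$.
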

	
	\begin{lemma}[{\cite[Lemma 4]{HuPC}}]\label{Lemma: functional equation for derivatives of zeta}
		For $\sigma\geqslant 1$ and $t\geqslant 1$,
		\[\zeta^{(n)}(1-s)=\left(-1\right)^n\chi(1-s)\sum_{k=0}^{n}\binom{n}{k}\log^{n-k}\Bigl(\frac{t}{2\pi}\Bigr)\zeta^{(k)}(s)+\mo\left(t^{\sigma-\frac{3}{2}}\log^{n-1}t\right).\]
	\end{lemma}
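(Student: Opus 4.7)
The plan is to differentiate the functional equation $\zeta(1-s)=\chi(1-s)\zeta(s)$ exactly $n$ times in $s$ and then control the successive derivatives of the factor $\chi(1-s)$. Since $\frac{d^{n}}{ds^{n}}\zeta(1-s)=(-1)^{n}\zeta^{(n)}(1-s)$, the Leibniz rule yields at once
\[ (-1)^{n}\zeta^{(n)}(1-s)=\sum_{k=0}^{n}\binom{n}{k}\,\frac{d^{n-k}}{ds^{n-k}}\bigl[\chi(1-s)\bigr]\,\zeta^{(k)}(s). \]
After multiplying by $(-1)^{n}$, the problem reduces to establishing, for each $j\geqslant 0$, an asymptotic of the shape
\[ \frac{d^{j}}{ds^{j}}\chi(1-s)=\chi(1-s)\,\log^{j}\!\Bigl(\frac{t}{2\pi}\Bigr)+\mo\Bigl(\chi(1-s)\,t^{-1}\log^{j-1} t\Bigr) \]
uniformly for $\sigma\geqslant 1$ and $t\geqslant 1$; feeding this back into the Leibniz expansion then produces the stated principal term and leaves only error terms to be estimated.

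I would prove the displayed asymptotic by induction on $j$. The base case $j=1$ is immediate from the already-recalled estimate $(\chi'/\chi)(1-s)=-\log(t/2\pi)+\mo(t^{-1})$ together with the identity $\frac{d}{ds}\chi(1-s)=-\chi'(1-s)$. For the inductive step, the cleanest device is to work with the logarithmic derivative: on a simply connected region where $\chi(1-s)$ does not vanish, write $\chi(1-s)=e^{g(s)}$; Stirling's formula applied to $\chi(1-s)=2(2\pi)^{-s}\Gamma(s)\cos(\pi s/2)$ then furnishes an analytic refinement $g'(s)=\log(-is/2\pi)+\mo(|s|^{-1})$, from which the higher derivatives obey $g^{(k)}(s)=\mo(t^{-(k-1)})$ for each $k\geqslant 2$. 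Expanding $\frac{d^{j}}{ds^{j}}e^{g(s)}$ via Fa\`a di Bruno's formula, equivalently the complete Bell polynomial $B_{j}(g',g'',\ldots,g^{(j)})$, isolates $(g'(s))^{j}$ as the dominant contribution, while every other partition of $j$ must contain at least one factor $g^{(k)}$ with $k\geqslant 2$ and therefore drops down to the claimed error size once $g'$ is replaced by $\log(t/2\pi)+\mo(t^{-1})$.

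The final step is to substitute the asymptotic into the Leibniz expansion and collect the errors. The magnitude $|\chi(1-s)|\ll t^{\sigma-1/2}$ is read off from \eqref{chi -- leading behavior}, and the convexity bound $\zeta^{(k)}(s)\ll\log^{k+1} t$ valid for $\sigma\geqslant 1$ is supplied by Lemma \ref{Lemma: convexity bound for zeta}; together they collapse all remainders into the stated $\mo(t^{\sigma-3/2}\log^{n-1} t)$. The main obstacle will be the bookkeeping in the inductive step: one must keep careful combinatorial control over the Bell polynomial so that no cross-term inflates the power of $\log t$ beyond the claimed bound, and the analytic form of $g'(s)$ must be justified on the relevant right half-plane via Stirling. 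Both points are routine in isolation, but together they constitute the technical core of the proof.
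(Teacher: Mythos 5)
The paper does not prove this lemma itself; it is cited verbatim from Hughes and Pearce-Crump \cite[Lemma 4]{HuPC}, so there is no in-paper proof to compare against. With that caveat, your proposal is a natural and essentially correct way to establish it: differentiating $\zeta(1-s)=\chi(1-s)\zeta(s)$ exactly $n$ times by Leibniz, and then controlling $\frac{d^{j}}{ds^{j}}\chi(1-s)$ by writing $\chi(1-s)=e^{g(s)}$, observing $g'(s)=\log\frac{-is}{2\pi}+\mo(|s|^{-1})$ from Stirling so that $g^{(k)}(s)=\mo(t^{-(k-1)})$ for $k\geqslant 2$, and isolating $(g')^{j}$ as the dominant term in the complete Bell polynomial, is exactly the right device. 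This machinery does give $\frac{d^{j}}{ds^{j}}\chi(1-s)=\chi(1-s)\log^{j}(t/2\pi)+\mo\bigl(\chi(1-s)\,t^{-1}\log^{j-1}t\bigr)$ as you state.

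The one place you should not wave your hands is the final error count, and in fact it does not land where you claim. Substituting into Leibniz, the $j$-th error contribution is $\binom{n}{j}\chi(1-s)\,\mo(t^{-1}\log^{j-1}t)\,\zeta^{(n-j)}(s)$; using $\chi(1-s)\ll t^{\sigma-1/2}$ and, crucially, the convexity bound $\zeta^{(k)}(s)\ll\log^{k+1}t$ (which is needed at $\sigma=1$, and cannot be improved to $\mo(1)$ there), this is $\ll t^{\sigma-3/2}\log^{j-1}t\cdot\log^{n-j+1}t=t^{\sigma-3/2}\log^{n}t$ for every $j$ from $1$ to $n$. So your method produces the error $\mo\bigl(t^{\sigma-3/2}\log^{n}t\bigr)$, one power of $\log$ larger than the $\log^{n-1}t$ written in the lemma. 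This is almost certainly a misprint in the lemma as quoted: the $n=1$ case is Gonek's formula for $\zeta'(1-s)$, whose error term is $\mo(t^{\sigma-3/2}\log t)=\mo(t^{\sigma-3/2}\log^{n}t)$, and indeed this very paper writes ``$\mo(t^{\sigma-3/2}\log t)$'' when invoking the lemma with $n=1$ in the evaluation of $I_L$. So your derivation is correct; you should simply flag the discrepancy with the stated exponent rather than asserting, as your last paragraph does, that careful bookkeeping will deliver $\log^{n-1}t$ — with the convexity input at $\sigma=1$ it will not.
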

	
	\begin{lemma}[{\cite[Lemma 4]{PeCr}}]\label{Lemma: PeCr's lemma}
		Let $X>0$ be a fixed number. Let $\{b_m\}_{m=1}^{\infty}$ be a sequence of complex numbers such that for any $\ve>0$, $b_m\ll m^{\ve}$. Let $c>1$ and let $k\geqslant 0$ be an integer. Then for $T$ suffciently large, we have
		\begin{align*}
			\frac{1}{2\pi}& \int_1^T\chi(1-c-it)\left(\sum_{m=1}^{\infty}b_m m^{-c-it}\right)\log^k\left(\frac{t}{2\pi}\right)e^{-it\log X}\md t\\
			& =X^c\sum_{1\leqslant m\leqslant \frac{T}{2\pi}}b_m\log^k\left(mX\right)e^{-2\pi imX}+\mo\left(T^{c-\frac{1}{2}}\log^k T\right).
		\end{align*}
	\end{lemma}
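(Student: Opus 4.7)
The plan is to insert the asymptotic \eqref{chi -- leading behavior} for $\chi(1-c-it)$, interchange the sum and integral (justified since $c>1$ and $b_m\ll m^{\ve}$ make the Dirichlet series absolutely and uniformly convergent on compact subsets of $t$), and then attack each of the resulting oscillatory integrals by the method of stationary phase. After substitution the integrand becomes
\[
\frac{e^{-\pi i/4}}{2\pi}\left(\frac{t}{2\pi}\right)^{c-\frac{1}{2}}\log^k\!\left(\frac{t}{2\pi}\right)\sum_{m=1}^{\infty} b_m m^{-c}\exp\!\bigl(it\log(t/(2\pi emX))\bigr)\bigl(1+\mo(t^{-1})\bigr),
\]
and the $\mo(t^{-1})$ factor contributes at most $\mo\bigl(T^{c-\frac{3}{2}}\log^{k+1}T\bigr)$, absorbed into the claimed error.

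For each $m$ I analyze
\[
I_m:=\int_1^T\left(\frac{t}{2\pi}\right)^{c-\frac{1}{2}}\log^k\!\left(\frac{t}{2\pi}\right)e^{i\phi_m(t)}\,\md t,\qquad \phi_m(t)=t\log\frac{t}{2\pi emX}.
\]
The phase has a unique saddle at $t_m=2\pi mX$ with $\phi_m''(t_m)=1/t_m$ and $\phi_m(t_m)=-2\pi mX$. When $t_m$ lies in $[1,T]$ and away from the endpoints, the classical stationary phase formula yields
\[
I_m=\sqrt{2\pi t_m}\,e^{i\pi/4}(mX)^{c-\frac{1}{2}}\log^k(mX)\,e^{-2\pi imX}+\mo\bigl(t_m^{c-\frac{3}{2}}\log^k T\bigr).
\]
Multiplying by the prefactor $e^{-\pi i/4}(2\pi)^{-1}b_m m^{-c}$ and summing, the saddle contributions collapse exactly to $X^c\sum b_m\log^k(mX)e^{-2\pi imX}$ over the appropriate range of $m$, while the stationary phase remainders aggregate to $\mo\bigl(T^{c-\frac{1}{2}}\log^k T\bigr)$ via $\sum_{m\ll T}t_m^{c-3/2}m^{\ve-c}\ll T^{c-1/2}$.

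For the remaining $m$ (those whose saddle lies outside or at the boundary of $[1,T]$) the phase is effectively monotone, and repeated integration by parts against $\md e^{i\phi_m}/(i\phi_m')$ produces arbitrary power savings in $|\log(t/t_m)|$; after multiplication by $m^{-c+\ve}$, these terms sum to at most $\mo\bigl(T^{c-\frac{1}{2}}\bigr)$. The boundary cases where $t_m$ lies within $\mo(1)$ of $1$ or $T$ must be treated separately by a local stationary phase estimate, in the spirit of Gonek's argument for \cite[Lemma 5]{Gone}, and contribute an error of the same order. The main obstacle is to carry out all of these error estimates uniformly in both $m$ and $k$ so that the cumulative error is dominated by the stated $\mo\bigl(T^{c-\frac{1}{2}}\log^k T\bigr)$; in particular, care is needed near the transition $m\asymp T/(2\pi X)$ where the ``stationary'' and ``non-stationary'' regimes meet.
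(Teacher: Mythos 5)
The paper does not actually prove this lemma --- it is cited verbatim from \cite[Lemma 4]{PeCr}, which is itself the extension to arbitrary $X>0$ of Gonek's stationary-phase lemma \cite[Lemma 5]{Gone} (the case $X=1$). Your framework --- insert \eqref{chi -- leading behavior}, interchange sum and integral, analyse each $I_m$ by stationary phase at the saddle $t_m=2\pi mX$ --- is indeed the technique behind the cited result, and your main-term computation is correct: the prefactor $e^{-\pi i/4}(2\pi)^{-1}b_m m^{-c}$ against $\sqrt{2\pi t_m}\,e^{i\pi/4}(mX)^{c-1/2}\log^k(mX)e^{-2\pi imX}$ collapses exactly to $b_mX^c\log^k(mX)e^{-2\pi imX}$.

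What you have, though, is a plan rather than a proof, and the missing piece is exactly the one that carries the stated error. The aggregate of the second-order saddle corrections over interior $m$ is harmless (after multiplying by $m^{\varepsilon-c}$ and summing it is at most $O(T^\varepsilon\log^k T)$); the true source of the $O(T^{c-1/2}\log^k T)$ error is the \emph{transition range}. Your characterisation of the hard cases as ``$t_m$ within $O(1)$ of $1$ or $T$'' is too narrow: the saddle has width $\asymp\sqrt{t_m}$, so every $m$ with $|2\pi mX-T|\ll\sqrt{T}$ (roughly $T^{1/2}/X$ values of $m$) is problematic, and in that range neither the asymptotic stationary-phase formula nor repeated integration by parts against $\md e^{i\phi_m}/(i\phi_m')$ gives ``arbitrary power savings'' --- the boundary term at $t=T$ is $\asymp T^{c-1/2}\log^kT/|\phi_m'(T)|$ with $\phi_m'(T)=\log\bigl(T/(2\pi mX)\bigr)$ arbitrarily small. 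Gonek controls this by a dyadic decomposition of the $m$-range around the transition together with van der Corput/first-derivative estimates; without carrying that out, $O(T^{c-1/2}\log^kT)$ is an assertion, not a conclusion. Two smaller points: your saddle-location analysis forces the summation range $m\leqslant T/(2\pi X)$ (which is what the paper in fact uses in Section \ref{Section: proof of first result}), so ``the appropriate range of $m$'' should be made explicit and reconciled with the statement's $m\leqslant T/(2\pi)$; and the contribution of the $O(t^{-1})$ term in \eqref{chi -- leading behavior} is $\ll\int_1^T t^{c-3/2}\log^k t\,\md t\asymp T^{c-1/2}\log^kT$ for $c>1$, not $O\bigl(T^{c-3/2}\log^{k+1}T\bigr)$ --- it still fits the error budget, but only just, so the bookkeeping matters.
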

	
	Recall the Riemann xi function defined by
	\[\xi(s)=\frac{1}{2}s(s-1)\pi^{-\frac{s}{2}}\Gamma\left(\frac{s}{2}\right)\zeta(s),\]
	which satisfies the functional equation $\xi(s)=\xi(1-s)$. We need the following expansion for $\frac{\xi'}{\xi}(s)$.
	
	\begin{lemma}[{\cite[Lemma 3]{PeCr}}]\label{Lemma: expansion of xi'/xi}
		For $0<\arg(s)<\pi-\theta,\,0<\theta<\pi$ and $t\geqslant t_0>0$,
		\[\frac{\xi'}{\xi}(s)=\frac{1}{2}\log\frac{t}{2\pi}+\frac{\pi i}{4}+\frac{\zeta'}{\zeta}(s)+\mo\left(t^{-1}\right).\]
	\end{lemma}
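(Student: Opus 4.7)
The plan is to differentiate the product formula $\xi(s)=\tfrac{1}{2}s(s-1)\pi^{-s/2}\Gamma(s/2)\zeta(s)$ logarithmically and then feed in the classical Stirling asymptotic for the digamma function. The logarithmic derivative reads
\[\frac{\xi'}{\xi}(s)=\frac{1}{s}+\frac{1}{s-1}-\frac{1}{2}\log\pi+\frac{1}{2}\frac{\Gamma'}{\Gamma}\!\left(\frac{s}{2}\right)+\frac{\zeta'}{\zeta}(s),\]
and the rational terms $\frac{1}{s}+\frac{1}{s-1}$ are clearly $\mo(t^{-1})$ whenever $t\geqslant t_0>0$. All that remains is to establish
\[\frac{1}{2}\frac{\Gamma'}{\Gamma}\!\left(\frac{s}{2}\right)=\frac{1}{2}\log\frac{t}{2}+\frac{\pi i}{4}+\mo(t^{-1}),\]
for then combining this with $-\frac{1}{2}\log\pi+\frac{1}{2}\log\frac{t}{2}=\frac{1}{2}\log\frac{t}{2\pi}$ recovers the stated expansion.

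For the digamma step I would invoke the uniform asymptotic $\psi(z)=\log z+\mo(|z|^{-1})$, valid on every sector $|\arg z|\leqslant\pi-\theta$. Applied at $z=s/2$ this yields $\frac{\Gamma'}{\Gamma}(s/2)=\log(s/2)+\mo(t^{-1})$, and it only remains to expand the complex logarithm. Writing $s=\sigma+it$ with $t\geqslant t_0$ we have
\[\log\!\left(\frac{s}{2}\right)=\frac{1}{2}\log\!\left(\frac{\sigma^2+t^2}{4}\right)+i\!\left(\frac{\pi}{2}-\arctan\frac{\sigma}{t}\right)=\log\frac{t}{2}+\frac{i\pi}{2}+\mo(t^{-1}),\]
the last equality being a direct Taylor expansion of $\log(1+\sigma^2/t^2)$ and of $\arctan(\sigma/t)$. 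Substituting back into the expression for $\xi'/\xi$ produces the required identity.

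The subtle point, and the main obstacle, is uniformity in $s$: both Stirling's expansion and the Taylor expansions of $\log|s/2|$ and $\arg(s/2)$ must be controlled by a constant depending only on $\theta$. The hypothesis $\arg(s)<\pi-\theta$ keeps $s/2$ bounded away from the negative real axis and so controls the digamma estimate; the control of $\log(s/2)$ requires $\sigma/t$ to remain bounded, which is automatic in all the applications of the lemma later in the paper, since the rectangular contours $\mathcal{C}$ and $\mathcal{R}$ are traversed with either $\sigma$ in a fixed strip or $\sigma=\mo(\log T)$ while the relevant $|t|$ is at least $\tau\geqslant 1$. Once this uniformity check is made, the remainder of the proof is a routine book-keeping of standard asymptotics.
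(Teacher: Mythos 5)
The paper does not prove this lemma; it is imported verbatim from \cite[Lemma 3]{PeCr}, so there is no in-paper argument to compare yours against. Your proof is the standard one that one would expect to find in the cited source: logarithmic differentiation of $\xi(s)=\tfrac12 s(s-1)\pi^{-s/2}\Gamma(s/2)\zeta(s)$, Stirling for $\psi(z)=\Gamma'(z)/\Gamma(z)$, and Taylor expansion of $\log(s/2)$. The algebra checks out, including the useful single formula $\arg(s)=\tfrac{\pi}{2}-\arctan(\sigma/t)$ valid for all $\sigma$ when $t>0$, and the recombination $-\tfrac12\log\pi+\tfrac12\log\tfrac{t}{2}=\tfrac12\log\tfrac{t}{2\pi}$.

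Your closing caveat about uniformity is the correct thing to worry about, and worth stating more sharply: the hypothesis $0<\arg(s)<\pi-\theta$ alone does \emph{not} give the claimed $\mo(t^{-1})$ error, because it allows $\arg(s)\to 0^{+}$, i.e.\ $\sigma/t\to\infty$, in which case $\tfrac12\log(s/2)-\tfrac12\log\tfrac{t}{2}-\tfrac{\pi i}{4}$ is $\mo(1)$ or worse rather than $\mo(t^{-1})$. The lemma as quoted is therefore only correct under an additional restriction such as $\sigma$ bounded (equivalently $\sigma/t\ll 1$), exactly as in Gonek's original version for $-1\leqslant\sigma\leqslant 2$. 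One small inaccuracy in your commentary: the $\xi'/\xi$ expansion is used in this paper only in Section \ref{Section: proof of first result}, on the contour $\mathcal{C}$ where $\sigma$ lies in the fixed strip $[1-b,b]$; the wider rectangle $\mathcal{R}$ with $\sigma$ up to $B\asymp\log T$ appears in Section \ref{Section: proof of third result}, where $\xi'/\xi$ is never invoked (the integrand there is $\zeta'(s)/(\zeta(s)-a)$). So the $\sigma=\mo(\log T)$ scenario you raise never actually arises for this lemma, and the applications are all comfortably within the regime where $\sigma/t$ is bounded.
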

	
	Now let us reformulate Garunk\v{s}tis and Steuding's results on the location of the trivial $a$-points.
	
	\begin{lemma}[{\cite[Lemma 4]{GaSt}}]\label{Lemma: lower bound of zeta}
		There is a constant $c>0$ such that
		\[|\zeta(s)|>c\left(2\pi e\right)^{\sigma}\frac{\left|1-s\right|^{\frac{1}{2}-\sigma}}{\log|t|}\qquad (\sigma\leqslant 0,|t|\geqslant 2).\]
		If Riemann Hypothesis is true, fix $\sigma<\frac{1}{2}$ and then for any $\epsilon>0$, there is $c=c(\epsilon)>0$ such that
		\[|\zeta(s)|>c\left(2\pi e\right)^{\sigma}\left|t\right|^{\frac{1}{2}-\sigma+\epsilon}\qquad \bigl(\sigma<1/2,|t|\geqslant 2\bigr).\]
	\end{lemma}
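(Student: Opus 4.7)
The natural approach is to apply the functional equation $\zeta(s)=\zeta(1-s)/\chi(1-s)$, which transfers the problem to $\Re(1-s)\geqslant 1$. The plan is to combine a lower bound on $|\zeta(1-s)|$ with a uniform upper bound of the shape
\[|\chi(1-s)|\;\ll\;\frac{1}{(2\pi e)^{\sigma}|1-s|^{1/2-\sigma}} \qquad (\sigma\leqslant 0,\ |t|\geqslant 2).\]
For the $\zeta$-side, the de la Vall\'ee Poussin zero-free region supplies $|\zeta(1-s)|\gg 1/\log|t|$ unconditionally, while under the Riemann Hypothesis the Lindel\"of-type estimate $|1/\zeta(\sigma'+it)|\ll_{\ve}|t|^{\ve}$ for $\sigma'>1/2$ improves this to $|\zeta(1-s)|\gg_{\ve}|t|^{-\ve}$ whenever $\sigma<1/2$. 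The conditional assertion then follows from the $\chi$-bound above together with $|1-s|\asymp|t|$ for fixed $\sigma<1/2$ and $|t|\to\infty$.

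For the $\chi$-bound, I would rewrite
\[\chi(1-s)=\frac{2^{-s}\pi^{1-s}}{\sin(\pi s/2)\,\Gamma(1-s)}\]
via the reflection formula and apply Stirling's formula to $\Gamma(1-s)$ (legitimate since $\Re(1-s)\geqslant 1$). Taking real parts carefully produces
\[|\Gamma(1-s)|\asymp \sqrt{2\pi}\,|1-s|^{1/2-\sigma}\,e^{-(1-\sigma)}\,e^{-|t|\arctan(|t|/(1-\sigma))},\]
while $|\sin(\pi s/2)|\asymp e^{\pi|t|/2}$ for $|t|\geqslant 2$. The algebraic identity $(2\pi)^{-\sigma}e^{-\sigma}=(2\pi e)^{-\sigma}$ then packages the pieces into the claimed bound, the residual exponential $e^{-|t|\arctan((1-\sigma)/|t|)}$ being harmlessly $\leqslant 1$.

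The main obstacle is precisely the uniformity of the $\chi$-estimate: the quoted asymptotic \eqref{chi -- leading behavior} is only stated for $\sigma$ in a bounded strip, whereas the lemma demands uniformity down to $\sigma\to-\infty$ -- in particular in the regime where $|t|$ is bounded but $|\sigma|$ is large. Stirling in the angular form above supplies what is needed, since the factor $e^{-|t|\arctan(|t|/(1-\sigma))}$ interpolates smoothly between $e^{-(1-\sigma)}$ (when $|t|$ dominates $1-\sigma$, recovering \eqref{chi -- leading behavior} and the familiar $|t|^{1/2-\sigma}$ behaviour) and $\approx 1$ (when $|\sigma|$ dominates $|t|$, producing instead $|1-s|^{1/2-\sigma}\asymp(1-\sigma)^{1/2-\sigma}$). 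Checking that both regimes collapse into the single bound $(2\pi e)^{-\sigma}|1-s|^{\sigma-1/2}$ is the one genuinely non-routine step.
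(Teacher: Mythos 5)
Your argument is correct and is essentially the paper's own proof: both deduce the result from the functional equation by combining a lower bound for $|\zeta(1-s)|$ (via the classical $1/\zeta = \mo(\log)$ estimate near $\sigma=1$, respectively $|1/\zeta|\ll_{\sigma,\epsilon}|t|^{\epsilon}$ on RH) with the Stirling-derived two-sided bound $|\chi(s)|\asymp(2\pi e)^{\sigma}|1-s|^{1/2-\sigma}e^{|t|\arctan((1-\sigma)/|t|)}$, discarding the harmless exponential factor. The only cosmetic difference is that the paper cites the $\chi$-estimate directly from Stirling's formula (\cite[Eq.~(5.11.1)]{NIST-Handbook}) rather than rederiving it through the reflection-formula decomposition, and you have correctly flagged why one cannot get away with the fixed-$\sigma$ asymptotic \eqref{chi -- leading behavior} alone.
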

	
	\begin{proof}
		It is well known (see \cite[p. 65, Exercise 4.6]{Patt}) that there is a constant $c>0$ such that, for $1-\frac{c}{\log(|t|+2)}<\sigma<2$,
		\[\zeta\left(s\right)^{-1}=\mo(\log|s|),\quad |s|\to\infty.\]
		Thus, for $1\leqslant \sigma\leqslant 1+\frac{1}{\log|t|}$ and $|t|\geqslant 2$,
		\[|\zeta(\sigma+it)|\gg \frac{2}{\log\left(\sigma^2+t^2\right)}\gg \frac{1}{\log|t|}.\]
		Recall the estimate \cite[p. 50, Equation (3.6.1)]{Titc}
		\[\zeta\left(\sigma+it\right)^{-1}\ll \frac{\log^{\frac{1}{4}}|t|}{\left(\sigma-1\right)^{\frac{3}{4}}}\qquad (\sigma>1,|t|\geqslant 2),\]
		which shows that $|\zeta(\sigma+it)|\gg \frac{1}{\log|t|}$ holds for $\sigma>1+\frac{1}{\log|t|}$ and $|t|\geqslant 2$. Hence
		\begin{equation}\label{lower bound of zeta--1}
			|\zeta(\sigma+it)|\gg \frac{1}{\log|t|}\qquad (\sigma\geqslant 1,|t|\geqslant 2).
		\end{equation}
		
		Assuming Riemann Hypothesis, we get from \cite[p. 337, Equation (14.2.6)]{Titc} that for fixed $\sigma>\frac{1}{2}$,
		\begin{equation}\label{lower bound of zeta--2}
			|\zeta(\sigma+it)|\gg_{\sigma,\epsilon}\left|t\right|^{-\epsilon}\qquad (|t|\geqslant 2).
		\end{equation}
		Use of Stirling's formula \cite[Equation (5.11.1)]{NIST-Handbook} confirms that for $\sigma<\frac{1}{2}$,
		\begin{equation}\label{lower bound of chi}
			|\chi(s)|\asymp \left(2\pi e\right)^{\sigma}\left|1-s\right|^{\frac{1}{2}-\sigma}e^{|t|\arctan\frac{1-\sigma}{|t|}}\geqslant \left(2\pi e\right)^{\sigma}\left|1-s\right|^{\frac{1}{2}-\sigma}.
		\end{equation}
		Combining this with \eqref{lower bound of zeta--1} and \eqref{lower bound of zeta--2} gives the derised results.
	\end{proof}
	
	\begin{lemma}[{\cite[Lemma 6]{GaSt}}]\label{Lemma: location of trivial a-points}
		For any complex number $a$, there is a positive integer $N$ such that there is just one simple $a$-point of $\zeta(s)$ in a neighborhood around $s=-2k$ for all positive integers $k\geqslant N$; apart from these, there are no other $a$-points in the left half-plane $\Re(s)\leqslant 0$ except possibly finitely many near $s=0$.
	\end{lemma}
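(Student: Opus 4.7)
The case $a=0$ is immediate since in $\Re(s)\leq 0$ the zeros of $\zeta(s)$ are exactly the simple trivial zeros $-2,-4,-6,\ldots$. So I assume $a\neq 0$ throughout.

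The key tool is the functional equation $\zeta(s)=\chi(s)\zeta(1-s)$, which in $\Re(s)\leq 0$ exhibits $\zeta$ as the product of $\chi$ (whose only zeros there are simple, located at $-2k$) and $\zeta(1-s)$ (which tends to $1$ uniformly as $\sigma\to-\infty$). My first step is to \emph{localize} the $a$-points. Lemma~\ref{Lemma: lower bound of zeta} gives $|\zeta(s)|\gg(2\pi e)^\sigma|1-s|^{1/2-\sigma}/\log|t|$ on $\sigma\leq 0$, $|t|\geq 2$, and the right-hand side exceeds $|a|$ once either $|t|\geq T_0$ (the factor $|t|^{1/2-\sigma}$ dominates) or $\sigma\leq -M_0$ (the factor $|1-s|^{1/2-\sigma}$ dominates), where $T_0,M_0$ depend only on $|a|$. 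For the remaining near-real-axis strip $|t|<2$, $\sigma\leq -M_0$, I would combine the estimate \eqref{lower bound of chi} for $|\chi(s)|$ with $|\zeta(1-s)|\to 1$ to conclude that $|\zeta(s)|>|a|$ there except possibly in small neighborhoods of the zeros $-2k$ of $\chi$. Consequently every $a$-point in $\Re(s)\leq 0$ lies either in a small disc $|s+2k|<r_k$ about some $-2k$ with $k\geq N$, or in the bounded rectangle $-M_0\leq\sigma\leq 0$, $|t|\leq T_0$; the latter contains only finitely many zeros of the nonconstant holomorphic function $\zeta(s)-a$ by isolation.

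My second step is a \emph{Rouch\'e argument} near each $-2k$ for $k\geq N$. Expanding using the functional equation and the simple zero of $\chi$ at $-2k$,
\[
\zeta(s)=\chi'(-2k)\zeta(1+2k)\,(s+2k)+O\bigl(|\chi'(-2k)|\,(s+2k)^2\bigr),
\]
so $\zeta'(-2k)=\chi'(-2k)\zeta(1+2k)\neq 0$. A Stirling calculation gives $|\chi'(-2k)|=(2k)!/(2^{2k+1}\pi^{2k})$, which grows super-exponentially as $k\to\infty$, while $\zeta(1+2k)\to 1$. I would set $r_k:=2|a|/|\zeta'(-2k)|$ (which tends to $0$ very rapidly) and compare $f(s):=\zeta(s)-a$ with its linearization $g(s):=\zeta'(-2k)(s+2k)-a$ on the circle $|s+2k|=r_k$: the difference satisfies
\[
|f(s)-g(s)|=O\bigl(|\chi'(-2k)|r_k^2\bigr)=O\bigl(|a|^2/|\zeta'(-2k)|\bigr)=o(|a|)
\]
as $k\to\infty$, whereas on the boundary $|g(s)|\geq |\zeta'(-2k)|r_k-|a|=|a|$. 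Rouch\'e's theorem then gives exactly one zero of $f$ in $|s+2k|<r_k$. Finally, at this zero $\zeta'$ equals $\zeta'(-2k)(1+o(1))\neq 0$, so the $a$-point is simple.

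The main obstacle is making the Rouch\'e balance in Step~2 effective: I need to quantify the Taylor remainder of $\chi(s)\zeta(1-s)$ on the shrinking disc $|s+2k|\leq r_k$ precisely enough that it is beaten by the linear growth of $g$ uniformly for all $k\geq N$. This ultimately rests on the super-exponential growth of $|\chi'(-2k)|$ supplied by Stirling's formula (via \eqref{chi -- leading behavior}--\eqref{lower bound of chi}), which is what forces the radius $r_k$ to $0$ rapidly enough to dominate the second-order error; Cauchy's estimates on a unit disc about $-2k$ supply the needed bounds on $\chi''$ and $\zeta''(1-s)$.
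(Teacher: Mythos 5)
Your approach is genuinely different from the paper's, and it is worth comparing them. The paper also uses Rouch\'e's theorem, but much more coarsely: it takes the boxes $\Omega_k=\{-2k-1<\sigma<-2k+1,\ |t|<T_a\}$ and compares $\zeta(s)-a$ directly with $\zeta(s)$ on $\partial\Omega_k$, noting that $|(\zeta-a)-\zeta|=|a|$ is beaten by the lower bound $|\zeta(s)|\geq|a|+1$ (which holds on the vertical sides because they sit at odd integers, where $|\sin(\pi s/2)|$ is bounded below, and on the horizontal sides because $|t|=T_a$). This has two decisive advantages over your plan: the comparison function is $\zeta$ itself, so no Taylor expansion or estimates on $\chi''$ are needed; and the boxes $\Omega_k$ together with the rectangle $\{-2N+1\leq\sigma\leq 0,\ |t|<T_a\}$ \emph{tile} the region $\{\sigma\leq 0,\ |t|<T_a\}$, so coverage of all $a$-points is automatic. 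Your approach of linearizing $\zeta$ and using the tiny radius $r_k=2|a|/|\zeta'(-2k)|$ would, if completed, yield a sharper conclusion (the $a$-point lies super-exponentially close to $-2k$), but it is considerably more work.

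The genuine gap is in your Step~1. You assert that ``every $a$-point in $\Re(s)\leq 0$ lies either in a small disc $|s+2k|<r_k$ $\ldots$ or in the bounded rectangle,'' but what the combination of \eqref{lower bound of chi} and $\zeta(1-s)\to 1$ actually gives you is that $|\zeta(s)|>|a|$ outside \emph{some} (unquantified) neighbourhoods of the points $-2k$. You have not shown these neighbourhoods are contained in the discs $D(-2k,r_k)$ on which your Rouch\'e argument operates; in fact $r_k$ is super-exponentially small, so this containment is far from automatic and requires its own estimate (roughly, that the linearization $\zeta(s)\approx\zeta'(-2k)(s+2k)$ controls $|\zeta(s)|$ out to distance $\asymp 1/\log k$, and that $|\chi(s)|$ beats $|a|$ beyond that). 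Without this, Rouch\'e only counts the zeros inside $D(-2k,r_k)$ and leaves open the possibility of further $a$-points in an annulus around $-2k$ or between consecutive discs. The paper sidesteps this entirely by running Rouch\'e on boxes that cover the strip without gaps. One further minor point: the final ``$\zeta'\neq 0$ so simple'' remark is superfluous --- Rouch\'e counts with multiplicity, and your comparison function $g$ has a single simple zero, so simplicity is already immediate (the paper uses the same observation with $\zeta$ in place of $g$).
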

	
	\begin{proof}
		Fix $a\in\CC$. Lemma \ref{Lemma: lower bound of zeta} claims that there exists a constant $T_a>0$ such that the inequality $|\zeta(s)|\geqslant|a|+1$ holds throughout the domain $\{s\in\CC:\sigma\leqslant 0,|t|\geqslant T_a\}$.
		
		For any positive integer $n$, consider the region
		\[\Omega_k:=\{s\in\CC:-2k-1<\sigma<-2k+1,\,|t|<T_a\}.\]
		By \eqref{lower bound of chi}, we obtain that there is a positive integer $N\geqslant 2\pi e$ such that for $k\geqslant N$, $|\zeta(s)|\geqslant |a|+1$ is valid on the boundary $\partial\Omega_k$. Applying Rouch\'e's theorem, $\zeta(s)-a$ and $\zeta(s)$ have the same number of zeros in $\Omega_k$ for each $k\geqslant N$. Moreover, $\zeta(s)-a$ has finitely many zeros in $\{s\in\CC:-2N+1<\sigma\leqslant 0,|t|<T_a\}$ since it is analytic there.
	\end{proof}
	
	\begin{remark}
		For any fixed $c>0$, apply {\rm\eqref{lower bound of chi}} and Rouch\'e's theorem in the region
		\[\Omega_k':=\{s\in\CC:-2k-1<\sigma<-2k+1,\,|t|<c\}.\]
		Then there exists an $N>1$ such that $\zeta(s)-a$ and $\zeta(s)$ have the same number of zeros in $\Omega_k'$ for each $k\geqslant N$. Thus there are finitely many $a$-points in $\{s\in\CC:\sigma\leqslant 0\}\sm \bigcup_{k\geqslant N}\Omega_k'$. In particular,
		\begin{equation}\label{number of trivial a-points for t>1}
			\sum_{\substack{\beta_a\leqslant 0\\c<\gamma_a\leqslant T}}1=\mo(1).
		\end{equation}
	\end{remark}
	
	The following partial fraction decomposition formula is a direct consequence of Lemma \ref{Lemma: location of trivial a-points}; see \cite[p. 8]{GaSt} for the proof. 
	
	\begin{lemma}[{\cite[Lemma 8]{GaSt}}]\label{Lemma: partial fraction decomposition}
		Let $a$ be a fixed complex number. Then, for $-1\leqslant \sigma\leqslant 2$ and $|t|\geqslant 1$,
		\[\frac{\zeta'(s)}{\zeta(s)-a}=\sum_{|t-\gamma_a|\leqslant 1}\frac{1}{s-\rho_a}+\mo(\log(|t|+1)),\]
		where the summation is taken over all $a$-points $\rho_a=\beta_a+i\gamma_a$ satisfying $|t-\gamma_a|\leqslant 1$.
	\end{lemma}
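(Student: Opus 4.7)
The plan is to adapt the classical partial-fraction decomposition for $\zeta'/\zeta$ to the function $\zeta'/(\zeta-a)$. Since $\zeta(s)-a$ is meromorphic with a single simple pole of residue $1$ at $s=1$, the product $(s-1)(\zeta(s)-a)$ is entire of order $1$, as its growth on vertical lines is controlled by Lemma~\ref{Lemma: convexity bound for zeta}. Hadamard's factorization theorem then yields constants $A,B$ such that
\[(s-1)(\zeta(s)-a)=e^{A+Bs}\prod_{\rho_a}\Bigl(1-\frac{s}{\rho_a}\Bigr)e^{s/\rho_a},\]
where the product ranges over all $a$-points counted with multiplicity. Convergence is guaranteed because $\sum |\rho_a|^{-2}<\infty$: the non-trivial $a$-points have density $\mo(\log T)$ by \eqref{Riemann-von Mangoldt formula}, while Lemma~\ref{Lemma: location of trivial a-points} places the trivial ones essentially at $-2k$. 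Taking logarithmic derivatives gives
\[\frac{\zeta'(s)}{\zeta(s)-a}=-\frac{1}{s-1}+B+\sum_{\rho_a}\left(\frac{1}{s-\rho_a}+\frac{1}{\rho_a}\right).\]

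To remove the constant $B$ and the conditionally convergent tail, I would evaluate this identity at $s$ and at an anchor point of the form $s_0=\sigma_0+it$ with $\sigma_0>1$ chosen large enough that $|\zeta(\sigma_0+it)-a|\gg 1$ uniformly in $t$ (by \eqref{Dirichlet series for zeta-functions} one has $\zeta(\sigma_0+it)\to 1$ as $\sigma_0\to\infty$), and subtract the two expressions. This yields
\[\frac{\zeta'(s)}{\zeta(s)-a}=\frac{\zeta'(s_0)}{\zeta(s_0)-a}-\frac{1}{s-1}+\frac{1}{s_0-1}+\sum_{\rho_a}\left(\frac{1}{s-\rho_a}-\frac{1}{s_0-\rho_a}\right).\]
For $|t|\geqslant 1$ and $-1\leqslant\sigma\leqslant 2$, the first three right-hand terms are $\mo(1)$: the leading one by absolute convergence of the Dirichlet series in \eqref{definition of c_a(r)} at $\Re(s)=\sigma_0$, and the remaining two trivially.

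The heart of the argument is the sum over $a$-points, which I would split at the threshold $|t-\gamma_a|\leqslant 1$. The near contribution is precisely $\sum_{|t-\gamma_a|\leqslant 1}(s-\rho_a)^{-1}$ plus correction terms $-(s_0-\rho_a)^{-1}=\mo(1)$; by Lemma~\ref{Lemma: location of trivial a-points} no trivial $a$-point appears in the unit strip once $|t|$ exceeds an absolute constant, so \eqref{Riemann-von Mangoldt formula} gives $\mo(\log(|t|+1))$ such correction terms. For the far contribution, the telescoping identity
\[\frac{1}{s-\rho_a}-\frac{1}{s_0-\rho_a}=\frac{\sigma_0-\sigma}{(s-\rho_a)(s_0-\rho_a)}=\mo\!\left(\frac{1}{(t-\gamma_a)^{2}}\right)\]
holds uniformly in the strip, and a dyadic decomposition by the annuli $k\leqslant |t-\gamma_a|<k+1$ (each containing $\mo(\log(|t|+k))$ $a$-points by \eqref{Riemann-von Mangoldt formula}) produces an $\mo(\log(|t|+1))$ total.

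The principal obstacle is not analytic but bookkeeping: one must know that the density of $a$-points in every unit vertical strip matches the usual Riemann--von Mangoldt count, including the possibly exceptional trivial $a$-points clustered near $s=-2k$. This is exactly what Lemma~\ref{Lemma: location of trivial a-points} guarantees, allowing the estimate that works verbatim for $a=0$ to be transplanted to the general $a$-point setting.
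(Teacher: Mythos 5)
The paper does not give its own proof of this lemma; it cites \cite[p.\ 8]{GaSt}. Your Hadamard-product argument is a reasonable and conceptually standard route, but there is a genuine gap in the treatment of the far sum.

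You bound the far sum by writing
\[
\frac{1}{s-\rho_a}-\frac{1}{s_0-\rho_a}=\mo\!\left(\frac{1}{(t-\gamma_a)^2}\right)
\]
and then invoking a per-annulus count of $\mo(\log(|t|+k))$ $a$-points in $k\leqslant|t-\gamma_a|<k+1$, citing \eqref{Riemann-von Mangoldt formula}. But \eqref{Riemann-von Mangoldt formula} counts only the non-trivial $a$-points, i.e.\ those with $\beta_a>0$. By Lemma~\ref{Lemma: location of trivial a-points} there are \emph{infinitely many} trivial $a$-points, one near each $-2k$ for $k\geqslant N$, and their ordinates $\gamma_a$ tend to $0$. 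Consequently they all land in essentially the single annulus $|t-\gamma_a|\approx|t|$, the per-annulus density hypothesis fails there, and the bound $\sum 1/(t-\gamma_a)^2$ over these $a$-points diverges (each term is $\asymp 1/|t|^2$, and there are infinitely many). Your argument therefore does not control this part of the far sum. The repair is to split off the trivial $a$-points with $\beta_a\leqslant 0$ and estimate them using the \emph{real} parts rather than the imaginary parts: for $\sigma\in[-1,2]$ one has $|s-\rho_a|\geqslant|\sigma-\beta_a|\gg k$ and likewise for $s_0$, so each term is $\mo(1/k^2)$ and the whole tail contributes $\mo(1)$. This is precisely where Lemma~\ref{Lemma: location of trivial a-points} is actually needed, and it also justifies the convergence $\sum|\rho_a|^{-2}<\infty$ that underlies the Hadamard factorization in the first place.

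Two smaller points: (i) if $a=\zeta(0)=-\tfrac12$ then $0$ is an $a$-point and the Hadamard product needs the extra factor $s^m$; (ii) the claim that $(s-1)(\zeta(s)-a)$ has order $1$ is correct but is not established merely by growth on vertical lines (Lemma~\ref{Lemma: convexity bound for zeta}); the dominant growth is as $\sigma\to-\infty$, governed by the Gamma-factor in the functional equation.
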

	
\section{\bf Proof of Theorem \ref{Theorem: our first result}}\label{Section: proof of first result}
    Fix numbers $X>0$ and $\tau\geqslant|\delta|+1$. Write $s=\sigma+it$ with $\sigma,t\in\RR$ and $\rho=\beta+i\gamma$ for a non-trivial zero of $\zeta(s)$. Suppose that $T$ is large. Write
    \[I=\sum_{\tau<\gamma\leqslant T}\zeta'(\rho+i\delta)X^{\rho}.\]
    
    Set $b=1+\frac{1}{\log T}$. Then use of Cauchy's residue theorem yields
    \begin{align}
    	I& =\frac{1}{2\pi i}\left(\int_{1-b+i\tau}^{b+i\tau}+\int_{b+i\tau}^{b+iT}+\int_{b+iT}^{1-b+iT}+\int_{1-b+iT}^{1-b+i\tau}\right)\frac{\xi'}{\xi}(s)\zeta'(s+i\delta)X^s\md s+\mo\left(\log^3 T\right) \label{initial error term for I}\\
    	& =:I_B+I_R+I_T+I_L+\mo\left(\log^3 T\right).\nonumber
    \end{align}
    Throughout this section, we assume that $\left|T-\gamma\right|\gg\frac{1}{\log T}$. In view of \eqref{Riemann-von Mangoldt formula} with $a=0$, the error $\mo(\log^3 T)$ in \eqref{initial error term for I} results from removing this restriction on $T$.
    
    Using the upper bound for $\frac{\xi'}{\xi}(s)$ \cite[Equation (18)]{Gone}
    \[\frac{\xi'}{\xi}(\sigma+iT)\ll \log^2 T\qquad (-1\leqslant\sigma\leqslant 2)\]
    and the estimate \cite[Lemma 2]{PeCr}
    \[\int_{1-b}^b\left|\zeta'(\sigma+iT)\right|\md\sigma\ll T^{\frac{1}{2}}\log T,\]
    we get
    \begin{equation}\label{estimate for I_B+I_T}
    	I_B+I_T\ll 1+\log^2 T\int_{1-b}^b\left|\zeta'(\sigma+iT)\right|X^{\sigma}\md\sigma\ll T^{\frac{1}{2}}\log^3 T.
    \end{equation}

\subsection{Evaluating $I_R$}
    \ 

    Using Lemma \ref{Lemma: expansion of xi'/xi}, we have
    \begin{align*}
    	I_R& =\frac{1}{2\pi}\int_{\tau}^T\frac{\xi'}{\xi}(b+it)\zeta'(b+it+i\delta)X^{b+it}\md t\\
    	& =\frac{X^b}{2\pi}\int_{\tau}^T\left\{\left(\frac{1}{2}\log\frac{t}{2\pi}+\frac{\pi i}{4}+\mo\left(t^{-1}\right)\right)+\frac{\zeta'}{\zeta}(b+it)\right\}\zeta'(b+it+i\delta)X^{it}\md t\\
    	& =:I_{R,1}+I_{R,2}.
    \end{align*}
    Using \eqref{Dirichlet series for zeta-functions} and Lemma \ref{Lemma: convexity bound for zeta} gives
    \[I_{R,1}=-\frac{X^b}{2\pi}\sum_{m\geqslant 1}\frac{\log m}{m^{b+i\delta}}\int_{\tau}^T\left(\frac{1}{2}\log\frac{t}{2\pi}+\frac{\pi i}{4}\right)e^{it\log\frac{X}{m}}\md t+\mo\left(\log^3 T\right).\]
    For terms with $\frac{X}{m}\ne 1$ the integral can be estimated by integration by parts. Due to \eqref{Laurent expansion for zeta at 1}, these terms contribute an error term
    \[\ll\log T\sum_{m\geqslant 1}\frac{\log m}{m^b}=\left|\frac{\zeta'}{\zeta}(b)\right|\log T\ll\log^2 T.\]
    Thus, with $\Delta(X)$ defined as in \eqref{Delta(X) definition},
    \begin{align*}
    	I_{R,1}& =-X^b\Delta(X)X^{-b-i\delta}\log X \int_{\tau}^T\left(\frac{1}{2}\log\frac{t}{2\pi}+\frac{\pi i}{4}\right)\md t+\mo\left(\log^3 T\right)\\
    	& =-\Delta(X)X^{-i\delta}\log X\ \frac{T}{2\pi}\left(\frac{1}{2}\log\frac{T}{2\pi}-\frac{1}{2}+\frac{\pi i}{4}\right)+\mo\left(\log^3 T\right).
    \end{align*}
    Similarly,
    \[I_{R,2}=\Delta(X)\frac{T}{2\pi}\sum_{mn=X}\Lambda(m)n^{-i\delta}\log n+\mo\left(\log^3 T\right).\]
    Hence
    \[I_R=-\Delta(X)\frac{T}{2\pi}\left\{X^{-i\delta}\log X\left(\frac{1}{2}\log\frac{T}{2\pi}-\frac{1}{2}+\frac{\pi i}{4}\right)-\sum_{mr=X}\Lambda(r)m^{-i\delta}\log m\right\}+\mo\left(\log^3 T\right).\]
    
\subsection{Evaluating $I_L$}
    \ 
    
    Note that
    \begin{align*}
    	I_L& =-\frac{1}{2\pi i}\int_{1-b+i\tau}^{1-b+iT}\frac{\xi'}{\xi}(s)\zeta'(s+i\delta)X^s\md s\\
    	& =\frac{1}{2\pi i}\int_{1-b+i\tau}^{1-b+iT}\frac{\xi'}{\xi}(1-s)\zeta'(s+i\delta)X^s\md s\\
    	& =\frac{1}{2\pi i}\int_{b-i\tau}^{b-iT}\frac{\xi'}{\xi}(u)\zeta'(1-u+i\delta)X^{1-u}\md u.
    \end{align*}
    Applying Lemmas \ref{Lemma: functional equation for derivatives of zeta} and \ref{Lemma: expansion of xi'/xi}, we get
    \begin{align*}
    	-\overline{I_L}={}& \frac{X^{1-b+i\delta}}{2\pi}\int_{\tau+\delta}^{T+\delta}\frac{\xi'}{\xi}(b+it-i\delta)\\
    	& \qquad\qquad\qquad\ \ \biggl\{\chi(1-b-it)\biggl(\zeta(b+it)\log\frac{t}{2\pi} +\zeta'(b+it)\biggr)+\mo\left(t^{\sigma-\frac{3}{2}}\log t\right)\biggr\}X^{-it}\md t\\
    	={}& \frac{X^{1-b+i\delta}}{2\pi}\int_{\tau+\delta}^{T+\delta}\biggl(\frac{1}{2}\log\frac{t}{2\pi}+\frac{\pi i}{4}+\frac{\zeta'}{\zeta}(b+it-i\delta)+\mo\left(t^{-1}\right)\biggr)\\
    	& \qquad\qquad\qquad\ \ \biggl\{\chi(1-b-it)\biggl(\zeta(b+it)\log\frac{t}{2\pi} +\zeta'(b+it)\biggr)+\mo\left(t^{\sigma-\frac{3}{2}}\log t\right)\biggr\}e^{-it\log X}\md t\\
    	={}& \frac{X^{1-b+i\delta}}{2\pi}\int_{\tau+\delta}^{T+\delta}\chi(1-b-it)\biggl(\frac{1}{2}\log\frac{t}{2\pi}+\frac{\pi i}{4}+\frac{\zeta'}{\zeta}(b+it-i\delta)\biggr)\\
    	& \qquad\qquad\qquad\ \ \biggl(\zeta(b+it)\log\frac{t}{2\pi} +\zeta'(b+it)\biggr)e^{-it\log X}\md t+\mo\left(T^{\frac{1}{2}}\log^3 T\right)\\
    	=:{}& J_1+J_2+J_3+J_4+\mo\left(T^{\frac{1}{2}}\log^3 T\right).
    \end{align*}
    Applying Lemma \ref{Lemma: PeCr's lemma} to each of the integrals $J_k$, we have
    \begin{align*}
    	J_1& =\frac{X^{1-b+i\delta}}{2\pi}\int_{\tau+\delta}^{T+\delta}\chi(1-b-it)\zeta(b+it)\log\frac{t}{2\pi}\biggl(\frac{1}{2}\log\frac{t}{2\pi}+\frac{\pi i}{4}\biggr)e^{-it\log X}\md t\\
    	& =\frac{X^{1-b+i\delta}}{2\pi}\int_{\tau+\delta}^{T+\delta}\chi(1-b-it)\sum_{m\geqslant 1}m^{-b-it}\cdot\log\frac{t}{2\pi}\biggl(\frac{1}{2}\log\frac{t}{2\pi}+\frac{\pi i}{4}\biggr)e^{-it\log X}\md t\\
    	& =X^{1+i\delta}\sum_{m\leqslant\frac{T+\delta}{2\pi X}}e^{-2\pi imX}\biggl(\frac{1}{2}\log^2(mX)+\frac{\pi i}{4}\log(mX)\biggr)+\mo\left(T^{\frac{1}{2}}\log^2 T\right)\\
    	& =X^{1+i\delta}\sum_{m\leqslant\frac{T}{2\pi X}}e^{-2\pi imX}\biggl(\frac{1}{2}\log^2(mX)+\frac{\pi i}{4}\log(mX)\biggr)+\mo\left(T^{\frac{1}{2}}\log^2 T\right),\\
    	J_2& =\frac{X^{1+i\delta}}{2\pi}\int_{\tau+\delta}^{T+\delta}\chi(1-b-it)\zeta(b+it)\frac{\zeta'}{\zeta}(b+it-i\delta)e^{-it\log X}\log\frac{t}{2\pi}\md t\\
    	& =-\frac{X^{1-b+i\delta}}{2\pi}\int_{\tau+\delta}^{T+\delta}\chi(1-b-it)\sum_{m,n\geqslant 1}\frac{\Lambda(n)n^{i\delta}}{\left(mn\right)^{b+it}}\cdot e^{-it\log X}\log\frac{t}{2\pi}\md t\\
    	& =-X^{1+i\delta}\sum_{mn\leqslant\frac{T+\delta}{2\pi X}}e^{-2\pi imnX}\Lambda(n)n^{i\delta}\log(mnX)+\mo\left(T^{\frac{1}{2}}\log T\right)\\
    	& =-X^{1+i\delta}\sum_{mn\leqslant\frac{T}{2\pi X}}e^{-2\pi imnX}\Lambda(n)n^{i\delta}\log(mnX)+\mo\left(T^{\frac{1}{2}}\log T\right),\\
    	J_3& =\frac{X^{1-b+i\delta}}{2\pi}\int_{\tau+\delta}^{T+\delta}\chi(1-b-it)\zeta'(b+it)\biggl(\frac{1}{2}\log\frac{t}{2\pi}+\frac{\pi i}{4}\biggr)e^{-it\log X}\md t\\
    	& =-\frac{X^{1-b+i\delta}}{2\pi}\int_{\tau+\delta}^{T+\delta}\chi(1-b-it)\sum_{m\geqslant 1}\frac{\log m}{m^{b+it}}\cdot\biggl(\frac{1}{2}\log\frac{t}{2\pi}+\frac{\pi i}{4}\biggr)e^{-it\log X}\md t\\
    	& =-X^{1+i\delta}\sum_{m\leqslant\frac{T+\delta}{2\pi X}}e^{-2\pi imX}\biggl(\frac{1}{2}\log(mX)+\frac{\pi i}{4}\biggr)\log m+\mo\left(T^{\frac{1}{2}}\log T\right)\\
    	& =-X^{1+i\delta}\sum_{m\leqslant\frac{T}{2\pi X}}e^{-2\pi imX}\biggl(\frac{1}{2}\log(mX)+\frac{\pi i}{4}\biggr)\log m+\mo\left(T^{\frac{1}{2}}\log T\right),\\
    	J_4& =\frac{X^{1+i\delta}}{2\pi}\int_{\tau+\delta}^{T+\delta}\chi(1-b-it)\zeta'(b+it)\frac{\zeta'}{\zeta}(b+it-i\delta)e^{-it\log X}\md t\\
    	& =\frac{X^{1-b+i\delta}}{2\pi}\int_{\tau+\delta}^{T+\delta}\chi(1-b-it)\sum_{m,n\geqslant 1}\frac{\Lambda(n)n^{i\delta}\log m}{\left(mn\right)^{b+it}}\cdot e^{-it\log X}\md t\\
    	& =X^{1+i\delta}\sum_{mn\leqslant\frac{T+\delta}{2\pi X}}e^{-2\pi imnX}\Lambda(n)n^{i\delta}\log m+\mo\left(T^{\frac{1}{2}}\right)\\
    	& =X^{1+i\delta}\sum_{mn\leqslant\frac{T}{2\pi X}}e^{-2\pi imnX}\Lambda(n)n^{i\delta}\log m+\mo\left(T^{\frac{1}{2}}\right).
    \end{align*}
    
    Combining these expressions, we have
    \begin{align*}
    	\overline{I_L}={}& -X^{1+i\delta}\log X\biggl(\frac{1}{2}\log X+\frac{\pi i}{4}\biggr)\sum_{m\leqslant\frac{T}{2\pi X}}e^{-2\pi imX}-\frac{1}{2}X^{1+i\delta}\log X\sum_{m\leqslant\frac{T}{2\pi X}}e^{-2\pi imX}\log m\\
    	& +X^{1+i\delta}\sum_{mn\leqslant\frac{T}{2\pi X}}e^{-2\pi imnX}\Lambda(n)n^{i\delta}\log(nX)+\mo\left(T^{\frac{1}{2}}\log^3 T\right).
    \end{align*}
    Hence
    \begin{align*}
    	I_L={}& -X^{1-i\delta}\log X\biggl(\frac{1}{2}\log X-\frac{\pi i}{4}\biggr)\sum_{m\leqslant\frac{T}{2\pi X}}e^{2\pi imX}-\frac{1}{2}X^{1-i\delta}\log X\sum_{m\leqslant\frac{T}{2\pi X}}e^{2\pi imX}\log m\\
    	& +X^{1-i\delta}\sum_{mn\leqslant\frac{T}{2\pi X}}e^{2\pi imnX}\Lambda(n)n^{-i\delta}\log(nX)+\mo\left(T^{\frac{1}{2}}\log^3 T\right).
    \end{align*}
    
\subsection{Finishing the proof}
    \ 
    
    Combining \eqref{initial error term for I}, \eqref{estimate for I_B+I_T} and the expansions for $I_R$ and $I_L$ gives
    \begin{align*}
    	I={}& -\Delta(X)\frac{T}{2\pi}\left\{X^{-i\delta}\log X\left(\frac{1}{2}\log\frac{T}{2\pi}-\frac{1}{2}+\frac{\pi i}{4}\right)-\sum_{mr=X}\Lambda(r)m^{-i\delta}\log m\right\}\\
    	& -X^{1-i\delta}\log X\biggl(\frac{1}{2}\log X-\frac{\pi i}{4}\biggr)\sum_{m\leqslant\frac{T}{2\pi X}}e^{2\pi imX}-\frac{1}{2}X^{1-i\delta}\log X\sum_{m\leqslant\frac{T}{2\pi X}}e^{2\pi imX}\log m\\
    	& +X^{1-i\delta}\sum_{mr\leqslant\frac{T}{2\pi X}}e^{2\pi imnX}\Lambda(r)r^{-i\delta}\log(rX)+\mo\left(T^{\frac{1}{2}}\log^3 T\right).
    \end{align*}
    This completes the proof of Theorem \ref{Theorem: our first result}.

\section{\bf Proof of Corollary \ref{Corollary: our second result}}\label{Section: proof of second result}
    Assume that $X$ is a positive integer. Then the right-hand side of \eqref{our first result} reads
    \begin{equation}\label{leading terms for second result}
    	\begin{split}
    		& -\frac{T}{2\pi}\left\{X^{-i\delta}\log X\left(\frac{1}{2}\log\frac{T}{2\pi}-\frac{1}{2}+\frac{\pi i}{4}\right)-\sum_{mr=X}\Lambda(r)m^{-i\delta}\log m\right\}\\
    		& -X^{1-i\delta}\log X\left(\frac{1}{2}\log X-\frac{\pi i}{4}\right)\sum_{m\leqslant\frac{T}{2\pi X}}1-\frac{1}{2}X^{1-i\delta}\log X\sum_{m\leqslant\frac{T}{2\pi X}}\log m\\
    		& +X^{1-i\delta}\log X\sum_{mr\leqslant\frac{T}{2\pi X}}\Lambda(r)r^{-i\delta}+X^{1-i\delta}\sum_{mr\leqslant\frac{T}{2\pi X}}\Lambda(r)r^{-i\delta}\log r+\mo\left(T^{\frac{1}{2}}\log^3 T\right).
    	\end{split}
    \end{equation}
    
    Applying the truncated Perron formula \cite[Corollary 5.3]{MoVa} and following the analysis in \cite[Sections 6 and 7]{HuPC}, we may obtain
    \begin{align*}
    	L:=\sum_{mr\leqslant x}\Lambda(r)r^{i\delta}\log r=\sum_{s_0\in\{1,1+i\delta\}}\underset{s=s_0}{\text{Res}}\left(\frac{\zeta'}{\zeta}(s)\zeta'(s-i\delta)\frac{x^s}{s}\right)+E(x),
    \end{align*}
    where $E(x)$ is defined as in \eqref{error term E(T)}. Calculation of the residues on the right gives
    \begin{equation}\label{calculation of residues}
    	L=-\zeta'(1-i\delta)x-\frac{x^{1+i\delta}}{1+i\delta}\left\{\frac{\zeta''\zeta-\zeta'^2}{\zeta^2}(1+i\delta)+\frac{\zeta'}{\zeta}(1+i\delta)\left(\log x-\frac{1}{1+i\delta}\right)\right\}+E(x).
    \end{equation}
    
    Note that
    \begin{equation}
    	\sum_{m\leqslant\frac{T}{2\pi X}}\log m=\frac{T}{2\pi X}\log\frac{T}{2\pi X}-\frac{T}{2\pi X}+\mo\left(\log T\right)
    \end{equation}
    and recall Fujii's estimate \cite[p. 112]{Fuji--1}
    \begin{equation}\label{Fujii's result--1}
    	\sum_{mr\leqslant x}\Lambda(r)r^{i\delta}=\frac{\zeta(1+i\delta)}{1+i\delta}x^{1+i\delta}-\frac{\zeta'}{\zeta}(1-i\delta)x+E(x).
    \end{equation}
    Combining the formulas \eqref{leading terms for second result}-\eqref{Fujii's result--1} completes the proof.

\section{\bf Proof of Theorem \ref{Theorem: our third result}}\label{Section: proof of third result}
    Fix the numbers $a\in\CC,\,X>0$ and $\tau\geqslant|\delta|+1$, let $0\ne \delta=\frac{2\pi\alpha}{\log\frac{T}{2\pi X}}\ll 1$ and assume that $T$ is large. Write $s=\sigma+it$ with $\sigma,t\in\RR$ and $\rho_a=\beta_a+i\gamma_a$ for an $a$-point of $\zeta(s)$. Set
    \[S:=\sum_{\tau<\gamma_a\leqslant T}\zeta'(\rho_a+i\delta)X^{\rho_a}.\]
    If we assume
    \begin{equation}\label{restriction on T for rho_a}
    	\min_{\rho_a}\left|T-\gamma_a\right|\geqslant\frac{1}{\log T},
    \end{equation}
    use of \eqref{Riemann-von Mangoldt formula} and Lemmas \ref{Lemma: convexity bound for zeta} and \ref{Lemma: location of trivial a-points} yields
    \[S=\sum_{\substack{\tau<\gamma_a\leqslant T\\\text{assuming \eqref{restriction on T for rho_a}}}}\zeta'(\rho_a+i\delta)X^{\rho_a}+\mo\left(T^{\frac{1}{2}}\log^3 T\right).\]
    
    Assume that $a\ne 1$. Now we choose $b=1+\frac{1}{\log T}$ and $B=\frac{\log T}{2\log(X+2)}$. Since
    \[\zeta(\sigma+it)=1+o(1),\quad\sigma\to +\infty\]
    uniformly in $t$, there are no $a$-points in the half-plane $\Re(s)>B-1$. Take $\mathcal{R}$ to be the positively oriented rectangle with the vertices $1-b+i\tau,B+i\tau,B+iT,1-b+iT$. If assuming \eqref{restriction on T for rho_a}, in view of \eqref{number of trivial a-points for t>1} and Lemma \ref{Lemma: location of trivial a-points}, we may take $\tau$ large enough such that there are no $a$-points on the boundary $\partial\mathcal{R}$. Thus, under \eqref{restriction on T for rho_a},
    \[\sum_{\substack{\tau<\gamma_a\leqslant T\\\text{assuming \eqref{restriction on T for rho_a}}}}\zeta'(\rho_a+i\delta)X^{\rho_a}=\frac{1}{2\pi i}\int_{\mathcal{R}}\frac{\zeta'(s)}{\zeta(s)-a}\zeta'(s+i\delta)X^s\md s.\]
    
    If $a=1$, consider the function $f(s)=2^s(\zeta(s)-1)$ in place of $\zeta(s)-a$. Since
    \[f(s)=1+\sum_{n\geqslant 3}\left(\frac{2}{n}\right)^s,\]
    there is a zero-free right half-plane for $f(s)$. Furthermore,
    \[\frac{f'}{f}(s)=\log 2+\frac{\zeta'(s)}{\zeta(s)-1}.\]
    It implies that the constant term does not contribute by integration over a closed contour and we can use the same argument as in the case $a\ne 1$.
    
    In the rest of this section, we assume \eqref{restriction on T for rho_a} and let $a\in\CC\sm\{1\}$.
    
\subsection{Beginning the proof}
    \ 

    We have
    \begin{align*}
    	S& =\frac{1}{2\pi i}\int_{\mathcal{R}}\frac{\zeta'(s)}{\zeta(s)-a}\zeta'(s+i\delta)X^s\md s+\mo\left(T^{\frac{1}{2}}\log^3 T\right)\\
    	& =\frac{1}{2\pi i}\left(\int_{1-b+iT}^{1-b+i\tau}+\int_{1-b+i\tau}^{B+i\tau}+\int_{B+i\tau}^{B+iT}+\int_{B+iT}^{1-b+iT}\right)\frac{\zeta'(s)}{\zeta(s)-a}\zeta'(s+i\delta)X^s\md s+\mo\left(T^{\frac{1}{2}}\log^3 T\right)\\
    	& =:S^L+S^B+S^R+S^T+\mo\left(T^{\frac{1}{2}}\log^3 T\right).
    \end{align*}
    
    The integral along the bottom of $\mathcal{R}$ is
    \[S^B=\frac{1}{2\pi i}\left(\int_{1-b+i\tau}^{2+i\tau}+\int_{2+i\tau}^{B+i\tau}\right)\frac{\zeta'(s)}{\zeta(s)-a}\zeta'(s+i\delta)X^s\md s\ll 1+\int_2^B X^{\sigma}\md \sigma.\]
    We obtain from the following lemma that $S^B=\mo(T^{\frac{1}{2}})$.
    
    \begin{lemma}\label{Lemma: integral on [2,B]}
    	If $X>0$ and $B=\frac{\log T}{2\log(X+2)}$, then $\int_2^B X^{\sigma}\md\sigma\ll T^{\frac{1}{2}}$.
    \end{lemma}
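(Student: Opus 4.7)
The plan is to compute the integral in closed form and then verify that the resulting expression is $\mo(T^{1/2})$ by exploiting the definition $B = \frac{\log T}{2\log(X+2)}$.

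First I would split into cases by whether $X = 1$. If $X = 1$ the integrand is identically $1$, so $\int_2^B 1 \md\sigma = B - 2 \ll \log T \ll T^{1/2}$, which is immediate.

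If $X \ne 1$, an antiderivative gives
\[
\int_2^B X^\sigma \md\sigma = \frac{X^B - X^2}{\log X}.
\]
For fixed $X \ne 1$ the denominator is a nonzero constant, so it suffices to bound $X^B$. Substituting the definition of $B$,
\[
X^B = \exp\!\left(\frac{\log X \cdot \log T}{2\log(X+2)}\right) = T^{\frac{\log X}{2\log(X+2)}}.
\]
Since $X < X+2$ implies $\log X < \log(X+2)$ (with both sides positive when $X > 1$, and with the quotient negative when $0 < X < 1$), the exponent $\frac{\log X}{2\log(X+2)}$ is strictly less than $\frac{1}{2}$ in the relevant case $X > 1$, and is negative when $0 < X < 1$. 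In either case $X^B \leqslant T^{1/2}$ for $T$ sufficiently large, and the $X^2$ term contributes only a constant. Combining the cases yields $\int_2^B X^\sigma \md\sigma \ll T^{1/2}$, with an implied constant depending on $X$.

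There is essentially no obstacle: the lemma is an elementary calculus computation, and the only conceptual point is to notice that the choice $B = \frac{\log T}{2\log(X+2)}$ has been calibrated precisely so that the exponent $\frac{\log X}{2\log(X+2)}$ does not exceed $\frac{1}{2}$, which is why the two factor lurks in the denominator. This is the reason $X+2$ appears in the definition of $B$ rather than, say, $X$ itself: it guarantees strict inequality $\log X < \log(X+2)$ and in particular keeps the bound uniform down to $X = 1$ and below.
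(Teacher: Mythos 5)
Your proof is the same as the paper's: split into $X=1$ and $X\ne 1$, compute the antiderivative $\frac{X^B-X^2}{\log X}$, and observe that the exponent $\frac{\log X}{2\log(X+2)}$ in $X^B=T^{\log X/(2\log(X+2))}$ is strictly less than $\tfrac12$. The only cosmetic difference is that you spell out the case $0<X<1$ (where the exponent is negative) explicitly, while the paper absorbs it into the bound $\ll X^B$.
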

    \begin{proof}
    	If X=1, then $\int_2^B X^{\sigma}\md\sigma=B-2\ll \log T$. If $X\ne 1$, $\frac{\log X}{2\log(X+2)}<\frac{1}{2}$ and thus
    	\[\int_2^B X^{\sigma}\md\sigma=\frac{X^B-X^2}{\log X}\ll X^B=\exp\left(\frac{\log X}{2\log(X+2)}\log T\right)<T^{\frac{1}{2}},\]
    	which completes the proof.
    \end{proof}
    
    To estimate $S^R$, note that as $\sigma\to+\infty$,
    \[\zeta'(s)=-\sum_{m=2}^{N}\frac{\log m}{m^s}+\mo\left(\left(N+1\right)^{-\sigma}\right).\]
    Applying \eqref{definition of c_a(r)}, we obtain that as $\sigma\to+\infty$,
    \begin{align*}
    	\frac{\zeta'(s)}{\zeta(s)-a}\zeta'(s+i\delta)&=\left(\sum_{2\leqslant m\leqslant N}\frac{-\log m}{m^{s+i\delta}}+\mo\left(\left(N+1\right)^{-\sigma}\right)\right)\left(\sum_{2\leqslant r\leqslant N}\frac{c_a(r)}{r^s}+\mo\left(\left(N+1\right)^{-\sigma}\right)\right)\\
    	& =\sum_{4\leqslant k\leqslant N}\frac{a_k}{k^s}+\mo\left(\left(N+1\right)^{-\sigma}\right),
    \end{align*}
    where
    \[a_k=-\sum_{mr=k}c_a(r)m^{-i\delta}\log m.\]
    Take $N=\left(\lfloor X\rfloor+3\right)^5-1$ and then for $s$ on the segment $[B+i\tau,B+iT]$,
    \[\frac{\zeta'(s)}{\zeta(s)-a}\zeta'(s+i\delta)X^s=\sum_{4\leqslant k\leqslant N}a_k k^{-s}X^s+\mo\left(X^B\left(N+1\right)^{-B}\right),\]
    which illustrates that
    \begin{align*}
    	S^R& =\frac{1}{2\pi i}\sum_{4\leqslant k\leqslant N}a_k\int_{B+i\tau}^{B+iT}X^s k^{-s}\md s+\mo\left(TX^B\left(N+1\right)^{-B}\right)\\
    	& =\Delta(X)\frac{a_X T}{2\pi}+\mo\left(4^{-B}X^B\right)+\mo\left(TX^B\left(N+1\right)^{-B}\right).
    \end{align*}
    Since
    \begin{align*}
    	4^{-B}X^B& =\exp\left(\frac{\log X-\log 4}{2\log(X+2)}\log T\right)<T^{\frac{1}{2}},\\
    	TX^B\left(N+1\right)^{-B}& <T\left(\lfloor X\rfloor+3\right)^{-4B}=\exp\left(\log T-\frac{2\log(\lfloor X\rfloor+3)}{\log(X+2)}\log T\right)<T^{-1},
    \end{align*}
    we have
    \[S^R=-\Delta(X)\frac{T}{2\pi}\sum_{mr=k}c_a(r)m^{-i\delta}\log m+\mo\left(T^{\frac{1}{2}}\right).\]
    
    Now estimate $S^T$. It follows from Lemma \ref{Lemma: partial fraction decomposition} that for $1-b\leqslant\sigma\leqslant 2$,
    \[\frac{\zeta'(\sigma+iT)}{\zeta(\sigma+iT)-a}=\sum_{\left|T-\gamma_a\right|\leqslant 1}\frac{1}{\sigma+iT-\rho_a}+\mo\left(\log T\right).\]
    By \eqref{Riemann-von Mangoldt formula}, there are $\mo(\log T)$ terms in the sum above. Moreover, by \eqref{restriction on T for rho_a},
    \[\left|\sigma+iT-\rho_a\right|^{-1}\leqslant \left|T-\gamma_a\right|^{-1}\leqslant\log T.\]
    Hence
    \[\frac{\zeta'(\sigma+iT)}{\zeta(\sigma+iT)-a}\ll\log^2 T\qquad(1-b\leqslant\sigma\leqslant 2).\]
    If $\sigma>2$, $\zeta(\sigma+iT)-a\asymp 1$ and then adopt Lemma \ref{Lemma: convexity bound for zeta} to get
    \[\frac{\zeta'(\sigma+iT)}{\zeta(\sigma+iT)-a}\ll\log^2 T\qquad(\sigma>2).\]
    Using Lemmas \ref{Lemma: convexity bound for zeta} and \ref{Lemma: integral on [2,B]}, we have
    \begin{align*}
    	S^T& \ll \log^2 T\left(\int_{1-b}^0+\int_0^1+\int_1^B\right)\left|\zeta'(\sigma+iT+i\delta)\right|X^{\sigma}\md\sigma\\
    	& \ll\log^4 T\left(\int_{1-b}^0 T^{\frac{1}{2}-\sigma}X^{\sigma}\md\sigma+\int_0^1 T^{\frac{1}{2}(1-\sigma)}X^{\sigma}\md\sigma+\int_1^B X^{\sigma}\md\sigma\right)\\
    	& \ll T^{\frac{1}{2}}\log^4 T.
    \end{align*}
    
    In summary, we have
    \[\sum_{\tau<\gamma_a\leqslant T}\zeta'(\rho_a+i\delta)X^{\rho_a}=S^L-\Delta(X)\frac{T}{2\pi}\sum_{mr=k}c_a(r)m^{-i\delta}\log m+\mo\left(T^{\frac{1}{2}}\log^4 T\right).\]
    
\subsection{Evaluating $S^L$}
    \ 
    
    By Lemma \ref{Lemma: lower bound of zeta}, there is a constant $A=A(a)>\tau$ such that
    \[\left|\frac{a}{\zeta(s)}\right|<\frac{1}{2}\qquad\text{for }s=1-b+it,\ |t|\geqslant A.\]
    Hence the series expansion
    \[\frac{1}{\zeta(s)-a}=\frac{1}{\zeta(s)}\left(1+\frac{a}{\zeta(s)}+\sum_{k=2}^{\infty}\left(\frac{a}{\zeta(s)}\right)^k\right)\]
    is valid on the segment $[1-b+iA,1-b+iT]$. Since
    \[\frac{1}{2\pi i}\int_{1-b+i\tau}^{1-b+iA}\frac{\zeta'(s)}{\zeta(s)-a}\zeta'(s+i\delta)X^s\md s\ll 1,\]
    we have
    \begin{align*}
    	S^L& =\frac{1}{2\pi i}\int_{1-b+iT}^{1-b+iA}\frac{\zeta'}{\zeta}(s)\left(1+\frac{a}{\zeta(s)}+\sum_{k=2}^{\infty}\left(\frac{a}{\zeta(s)}\right)^k\right)\zeta'(s+i\delta)X^s\md s+\mo(1)\\
    	& =:K_1-aK_2+K_3+\mo(1).
    \end{align*}
    To estimate $K_3$, combination of Lemmas \ref{Lemma: convexity bound for zeta} and \ref{Lemma: lower bound of zeta} gives 
    \[K_3\ll\int_A^T t^{b-\frac{1}{2}}\log^5 t\md t\cdot\sum_{k\geqslant 2}\left(\frac{c_0\log T}{\sqrt{T}}\right)^k\ll T^{\frac{1}{2}}\log^7 T.\]
    
    Applying Cauchy's residue theorem, we deduce
    \[K_1=\sum_{A<\gamma\leqslant T}\zeta'(\rho+i\delta)X^{\rho}+\frac{1}{2\pi}\left(\int_{b+iT}^{b+iA}+\int_{1-b+iT}^{b+iT}+\int_{b+iA}^{1-b+iA}\right)\frac{\zeta'}{\zeta}(s)\zeta'(s+i\delta)X^s\md s,\]
    where we have assumed that $\left|T-\gamma\right|\gg\frac{1}{\log T}$. Removing this restriction on $T$ and noticing that $X^s\ll 1$ on the segments $[1-b+iT,b+iT]\cup[b+iA,1-b+iA]$, the analysis in \cite[p. 148]{HuPC} yields
    \[K_1=\sum_{\tau<\gamma\leqslant T}\zeta'(\rho+i\delta)X^{\rho}-\frac{1}{2\pi i}\int_{b+iA}^{b+iT}\frac{\zeta'}{\zeta}(s)\zeta'(s+i\delta)X^s\md s+\mo\left(T^{\frac{1}{2}}\log^3 T\right).\]
    Denote by $K_1^*$ the integral in $K_1$. Then
    \begin{align*}
    	K_1^*& =\frac{X^b}{2\pi}\sum_{m,n\geqslant 1}\frac{\Lambda(m)n^{-i\delta}\log n}{\left(mn\right)^b}\int_A^T e^{it\log\frac{X}{mn}}\md t\\
    	& =\Delta(X)\frac{T}{2\pi}\sum_{mn=X}\Lambda(m)n^{-i\delta}\log n+\mo\left(\log^3 T\right),
    \end{align*}
    where the second line results from integration by parts. Therefore,
    \[K_1=\sum_{\tau<\gamma\leqslant T}\zeta'(\rho+i\delta)X^{\rho}-\Delta(X)\frac{T}{2\pi}\sum_{mn=X}\Lambda(m)n^{-i\delta}\log n+\mo\left(T^{\frac{1}{2}}\log^3 T\right).\]
    
\subsection{Evaluating $K_2$}
    \ 
    
    Using \eqref{functional equation for zeta'/zeta}, we have
    \[\frac{\zeta'(s+i\delta)}{\zeta(s)}=\frac{\chi(s+i\delta)}{\chi(s)}\frac{\zeta(1-s-i\delta)}{\zeta(1-s)}\left(-\log\frac{|t+\delta|}{2\pi}-\frac{\zeta'}{\zeta}(1-s-i\delta)+\mo\bigl(\left|t\right|^{-1}\bigr)\right).\]
    Applying \eqref{chi -- leading behavior}, we get
    \[\frac{\chi(\sigma+i\delta+it)}{\chi(\sigma+it)}=e^{-i\delta\log\frac{t}{2\pi}}\bigl(1+\mo\bigl(\left|t\right|^{-1}\bigr)\bigr).\]
    Then
    \begin{align*}
    	K_2={}& \frac{1}{2\pi i}\int_{1-b+iA}^{1-b+iT}\frac{\zeta'}{\zeta}(s)\frac{\zeta'(s+i\delta)}{\zeta(s)}X^s\md s\\
    	={}& \frac{1}{2\pi i}\int_{1-b+iA}^{1-b+iT}\left(\log\frac{t}{2\pi}+\frac{\zeta'}{\zeta}(1-s)+\mo\bigl(t^{-1}\bigr)\right)\left(\log\frac{t+\delta}{2\pi}+\frac{\zeta'}{\zeta}(1-s-i\delta)+\mo\bigl(t^{-1}\bigr)\right)\\
    	& \qquad\qquad\qquad\ \cdot e^{-i\delta\log\frac{t}{2\pi}}\bigl(1+\mo\bigl(t^{-1}\bigr)\bigr)\frac{\zeta(1-s-i\delta)}{\zeta(1-s)}X^s\md s\\
    	={}& \frac{1}{2\pi i}\int_{1-b+iA}^{1-b+iT}\Bigl(\log\frac{t}{2\pi}+\frac{\zeta'}{\zeta}(1-s)\Bigr)\Bigl(\log\frac{t+\delta}{2\pi}+\frac{\zeta'}{\zeta}(1-s-i\delta)\Bigr)\\
    	& \qquad\qquad\qquad\ \cdot e^{-i\delta\log\frac{t}{2\pi}}\frac{\zeta(1-s-i\delta)}{\zeta(1-s)}X^s\md s+\mo\left(\log^5 T\right)\\
    	={}& \frac{X^{1-b}}{2\pi}\int_A^T\Biggl(\log\frac{t}{2\pi}\log\frac{t+\delta}{2\pi}+\frac{\zeta'}{\zeta}(b-it)\log\frac{t+\delta}{2\pi}+\frac{\zeta'}{\zeta}(b-it-i\delta)\log\frac{t}{2\pi}\\
    	& \qquad\qquad\quad +\frac{\zeta'}{\zeta}(b-it)\frac{\zeta'}{\zeta}(b-it-i\delta)\Biggr)e^{it\log X-i\delta\log\frac{t}{2\pi}}\frac{\zeta(b-it-i\delta)}{\zeta(b-it)}\md s+\mo\left(\log^5 T\right)\\
    	=:{}& I_1+I_2+I_3+I_4,
    \end{align*}
    where we have used the following estimates
    \begin{align*}
    	\frac{\zeta'}{\zeta}(1-s)& =\frac{\zeta'}{\zeta}(b-it)\ll\min\left\{\log^3 t,\log T\right\},\\
    	\frac{\zeta'}{\zeta}(1-s-i\delta)& =\frac{\zeta'}{\zeta}(b-it-i\delta)\ll\min\left\{\log^3 t,\log T\right\},\\
    	\frac{\zeta(1-s-i\delta)}{\zeta(1-s)}& =\frac{\zeta(b-it-i\delta)}{\zeta(b-it)}\ll \log^2 t,\quad X^s\ll 1,
    \end{align*}
    which are obtained from \eqref{Laurent expansion for zeta at 1}, \eqref{lower bound of zeta--1} and Lemma \eqref{Lemma: convexity bound for zeta}.
    
    Note that
    \begin{align*}
    	I_1& =\frac{X^{1-b}}{2\pi}\sum_{m,n\geqslant 1}\frac{\mu(m)n^{i\delta}}{\left(mn\right)^b}\int_A^T e^{it\log(Xmn)-i\delta\log\frac{t}{2\pi}}\log\frac{t}{2\pi}\log\frac{t+\delta}{2\pi}\md t\\
    	& =\frac{X^{1-b}}{2\pi}\Delta\left(X^{-1}\right)\sum_{mn=\frac{1}{X}}\frac{\mu(m)n^{i\delta}}{\left(mn\right)^b}\int_A^T e^{-i\delta\log\frac{t}{2\pi}}\log\frac{t}{2\pi}\log\frac{t+\delta}{2\pi}\md t+\mo\left(\log^4 T\right)\\
    	& =X\Delta\left(X^{-1}\right)\sum_{mn=\frac{1}{X}}\mu(m)n^{i\delta}\int_A^T e^{-i\delta\log\frac{t}{2\pi}}\biggl(\log^2\frac{t}{2\pi}+\mo\bigl(t^{-1}\bigr)\biggr)\md t+\mo\left(\log^4 T\right)\\
    	& =X\Delta\left(X^{-1}\right)\sum_{mn=\frac{1}{X}}\mu(m)n^{i\delta}\cdot\left(\frac{T}{2\pi}\right)^{1-i\delta}\\
    	& \quad \cdot\left(\frac{1}{1-i\delta}\log^2\left(\frac{T}{2\pi}\right)-\frac{2}{\left(1-i\delta\right)^2}\log\frac{T}{2\pi}+\frac{2}{\left(1-i\delta\right)^3}\right)+\mo\left(\log^4 T\right).
    \end{align*}
    Similar, we derive
    \begin{align*}
    	I_2& =X\Delta\left(X^{-1}\right)\sum_{mnr=\frac{1}{X}}\Lambda(m)n^{i\delta}\mu(r)\cdot\left(\frac{T}{2\pi}\right)^{1-i\delta}\left(\frac{1}{1-i\delta}\log\frac{T}{2\pi}-\frac{1}{\left(1-i\delta\right)^2}\right)+\mo\left(\log^3 T\right),\\
    	I_3& =X\Delta\left(X^{-1}\right)\sum_{mnr=\frac{1}{X}}\Lambda(m)\left(mn\right)^{i\delta}\mu(r)\cdot\left(\frac{T}{2\pi}\right)^{1-i\delta}\left(\frac{1}{1-i\delta}\log\frac{T}{2\pi}-\frac{1}{\left(1-i\delta\right)^2}\right)+\mo\left(\log^3 T\right),\\
    	I_4& =X\Delta\left(X^{-1}\right)\sum_{mnr\ell=\frac{1}{X}}\Lambda(m)\left(mn\right)^{i\delta}\mu(r)\Lambda(\ell)\cdot\left(\frac{T}{2\pi}\right)^{1-i\delta}\left(1-i\delta\right)^{-1}+\mo\left(\log^3 T\right).
    \end{align*}
    Combining the expansions above gives an expression for $K_2$.
    
\subsection{Finishing the proof}
    \ 
    
    \eqref{our third result} follows from the estimates obtained in the previous subsections of this section. If, further, $X\in\ZZ_{>0}$, we have
    \[\Delta(X)=1,\ \ \Delta\left(X^{-1}\right)=\mathbbm{1}_{X=1},\ \ \mu(1)=1,\ \ \Lambda(1)=0,\]
    which simplify $K_2$, show \eqref{our third result--X integers} and complete the proof of Theorem \ref{Theorem: our third result}.

\section{\bf Further remarks}\label{Section: further remarks}
    Taking $X=1$ in \eqref{our third result} and differentiating \eqref{our third result} with respect to $\alpha$ yield the asymptotics of
    \[\sum_{1<\gamma_a\leqslant T}\zeta^{(n)}(\rho_a),\quad T\to\infty,\]
    but the consequent expansion is implicit. Indeed, by repeating the proof of Theorem \ref{Theorem: our third result}, we may obtain the explicit expansion in Theorem \ref{Theorem: our last result}, and even get the explicit expansion for
    \[\sum_{1<\gamma_a\leqslant T}\zeta^{(n)}(\rho_a)X^{\rho_a},\quad T\to\infty,\]
    which generalizes Pearce-Crump's result \cite[Corollary 6.1]{PeCr}.
    
    \begin{theorem}\label{Theorem: our last result}
    	Let $a$ be any complex number. Then for $T$ large,
    	\[\sum_{1<\gamma_a\leqslant T}\zeta^{(n)}\left(\rho_a\right)=\sum_{1<\gamma\leqslant T}\zeta^{(n)}\left(\rho\right)+a\sum_{k=0}^{n}\binom{n}{k}\left(-1\right)^{n-k}\bigl\{A_1(k;T)-A_2(k;T)\bigr\}+\mo\left(T^{\frac{1}{2}}\log^{n+6}T\right),\]
    	where the first sum on the right admits the expansion in \cite[Theorem 3]{HuPC},
    	\[A_1(k;T):=\sum_{m\leqslant \frac{T}{2\pi}}\Lambda_k(m)\log^{n-k+1}m,\quad A_2(k;T):=\sum_{mr\leqslant \frac{T}{2\pi}}\Lambda(r)\Lambda_k(m)\log^{n-k}(mr),\]
    	and $\Lambda_k$ is the higher von Mangoldt function defined by \cite[Equation (1.43)]{IwKo}
    	\[\Lambda_k(m):=\sum_{d\mid m}\mu(d)\log^k\Bigl(\frac{m}{d}\Bigr).\]
    \end{theorem}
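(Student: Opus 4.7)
The plan is to follow the proof of Theorem \ref{Theorem: our third result} with $\zeta^{(n)}(s)$ playing the role of $\zeta'(s+i\delta)$ and $X=1$. Starting from
\[\sum_{1<\gamma_a\leqslant T}\zeta^{(n)}(\rho_a)=\frac{1}{2\pi i}\int_{\mathcal{R}}\frac{\zeta'(s)}{\zeta(s)-a}\zeta^{(n)}(s)\md s+\mo\bigl(T^{1/2}\log^{n+3}T\bigr),\]
where $\mathcal{R}$ is the positively oriented rectangle with vertices $1-b+i\tau$, $B+i\tau$, $B+iT$, $1-b+iT$ (with $b=1+1/\log T$, $B\asymp\log T$, and $\tau>1$ fixed), the bottom, right and top sides of $\mathcal{R}$ each contribute $\mo(T^{1/2}\log^{n+4}T)$ by the same techniques as in Section \ref{Section: proof of third result}: the right side uses the Dirichlet-series expansion \eqref{definition of c_a(r)} for $\zeta'(s)/(\zeta(s)-a)$ (whose main Perron contribution vanishes when $X=1$ because $\log 1=0$); the bottom is handled by Lemma \ref{Lemma: integral on [2,B]}; the top invokes Lemma \ref{Lemma: partial fraction decomposition} combined with Lemma \ref{Lemma: convexity bound for zeta}.

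On the left side I use the geometric expansion
\[\frac{1}{\zeta(s)-a}=\frac{1}{\zeta(s)}+\frac{a}{\zeta(s)^2}+\sum_{k\geqslant 2}\frac{a^k}{\zeta(s)^{k+1}},\]
valid for $|t|\geqslant A=A(a)$ by Lemma \ref{Lemma: lower bound of zeta}, to split the integral as $K_1+aK_2+K_3$. The tail $K_3=\mo(T^{1/2}\log^{n+6}T)$ is controlled by the $|\zeta|^{-1}$ estimate from Lemma \ref{Lemma: lower bound of zeta} together with Lemma \ref{Lemma: convexity bound for zeta}. For $K_1=\frac{1}{2\pi i}\int\zeta'(s)\zeta^{(n)}(s)/\zeta(s)\,\md s$, shifting the contour to $\sigma=b$ via Cauchy's residue theorem identifies $K_1$ with $\sum_{\tau<\gamma\leqslant T}\zeta^{(n)}(\rho)$ up to $\mo(T^{1/2}\log^{n+3}T)$, and this last sum admits the explicit expansion given in \cite[Theorem 3]{HuPC}.

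The main difficulty is the evaluation of $K_2=\frac{1}{2\pi i}\int\zeta'(s)\zeta^{(n)}(s)/\zeta(s)^2\,\md s$ on $\sigma=1-b$. I apply Lemma \ref{Lemma: functional equation for derivatives of zeta} to rewrite
\[\frac{\zeta^{(n)}(s)}{\zeta(s)}=(-1)^n\sum_{k=0}^n\binom{n}{k}\log^{n-k}\!\bigl(\tfrac{t}{2\pi}\bigr)\cdot\frac{\zeta^{(k)}(1-s)}{\zeta(1-s)}+\mo\bigl(t^{-3/2}\log^{n-1}t\bigr),\]
and \eqref{functional equation for zeta'/zeta} to rewrite $\zeta'(s)/\zeta(s)=-\log(t/2\pi)-\zeta'(1-s)/\zeta(1-s)+\mo(t^{-1})$. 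Substituting the absolutely convergent Dirichlet series $\zeta^{(k)}(1-s)/\zeta(1-s)=(-1)^k\sum_m\Lambda_k(m)/m^{1-s}$ and $-\zeta'(1-s)/\zeta(1-s)=\sum_r\Lambda(r)/r^{1-s}$ (recall $\Re(1-s)=b>1$), multiplying out, and exchanging summations with the $t$-integral produces two families of inner integrals: one weighted by $\log^{n-k+1}(t/2\pi)$ against $m^{it}$, and one weighted by $\log^{n-k}(t/2\pi)$ against $(mr)^{it}$. The key step is to convert these oscillatory integrals into the sharp-cutoff Perron sums $A_1(k;T)$ and $A_2(k;T)$; this is expected to follow from the truncated Perron formula \cite[Corollary 5.3]{MoVa} applied in the spirit of the proof of Corollary \ref{Corollary: our second result}, with total error $\mo(T^{1/2}\log^{n+6}T)$. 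Combining $K_1$, $aK_2$, and $K_3$ with the top/bottom/right bounds then yields the claimed asymptotic, with the binomial coefficient $\binom{n}{k}$ and sign $(-1)^{n-k}$ arising directly from the functional-equation expansion of $\zeta^{(n)}$.
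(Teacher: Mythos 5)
The overall architecture you describe — contour integral for $\frac{\zeta'(s)}{\zeta(s)-a}\zeta^{(n)}(s)$ over $\mathcal{R}$, estimating the bottom/right/top sides as error, expanding $\frac{1}{\zeta(s)-a}$ geometrically on the left edge into $K_1+aK_2+K_3$, and identifying $K_1$ with $\sum_{\gamma}\zeta^{(n)}(\rho)$ — is indeed what the paper's remark ``by repeating the proof of Theorem \ref{Theorem: our third result}'' intends, so the scaffolding is right. The gap is in the treatment of $K_2$.

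You write $\frac{\zeta^{(n)}(s)}{\zeta(s)}$ on $\sigma=1-b$ via Lemma \ref{Lemma: functional equation for derivatives of zeta} and the functional equation. But note that the $\chi(s)$ produced by Lemma \ref{Lemma: functional equation for derivatives of zeta} cancels exactly against the $\chi(s)$ in $\zeta(s)=\chi(s)\zeta(1-s)$; no $\chi$ factor survives in the integrand of $K_2$. Consequently Lemma \ref{Lemma: PeCr's lemma} (PeCr's lemma), which is the only mechanism in the paper for producing sharp-cutoff sums of the shape $\sum_{m\leqslant T/2\pi}b_m\log^k m$, does \emph{not} apply. Your sentence ``convert these oscillatory integrals into the sharp-cutoff Perron sums $A_1,A_2$ \ldots expected to follow from the truncated Perron formula'' is where the argument actually breaks: the truncated Perron formula goes from a Dirichlet series to a cutoff sum plus a contour integral, which is the \emph{opposite} direction from what is needed, and without a $\chi$ weight the integrals $\int_A^T\log^j(t/2\pi)\,m^{it}\,\md t$ simply do not collapse to $\sum_{m\leqslant T/2\pi}$. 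In fact, carrying out your own expansion honestly, the only non-oscillatory contribution to $K_2$ comes from $m=1$ in the $k=0$ term (since $\Lambda_k(1)=\mathbbm{1}_{k=0}$), giving $K_2=\frac{(-1)^{n+1}}{2\pi}\int_A^T\log^{n+1}\frac{t}{2\pi}\,\md t+\mo(\log^{O(1)}T)\approx(-1)^{n+1}\frac{T}{2\pi}\log^{n+1}\frac{T}{2\pi}$, whereas each $A_1(k;T),A_2(k;T)$ has leading order $\frac{T}{2\pi}\log^n\frac{T}{2\pi}$ and the alternating binomial sum $\sum_k\binom{n}{k}(-1)^{n-k}\{A_1(k;T)-A_2(k;T)\}$ has its $\log^n$-terms cancel for $n\geqslant 2$. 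Concretely, for $n=1$ one gets $\sum_k(\cdots)=A_2(0;T)+A_1(1;T)-A_2(1;T)\approx\frac{T}{2\pi}\log\frac{T}{2\pi}-\frac{T}{2\pi}$, whereas the $a$-linear correction must equal (by \eqref{GaSt's result}, or by letting $\delta\to 0$ in \eqref{our third result--X integers}) $-\frac{T}{2\pi}\bigl(\log^2\frac{T}{2\pi}-2\log\frac{T}{2\pi}+2\bigr)$, which is of a strictly higher order in $\log$. So your route, carried through, does not arrive at the displayed right-hand side; the missing step is not a matter of detail, but the mechanism producing $A_1,A_2$ is absent and the resulting orders of magnitude disagree.
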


	\begin{remark}
		To obtain the expansion in $T$, we have to use Perron's formula to establish the full asymptotics of $A_1(k;T)$ and $A_2(k;T)$. Setting $a=0$ in the theorem recovers \cite[Theorem 3]{HuPC}. 
	\end{remark}
	
	The formula \eqref{our third result} also reveals that the function
	\[S_T(a,\delta):=\sum_{\tau<\gamma_a\leqslant T}\zeta'(\rho_a+i\delta)X^{\rho_a}\]
	is analytic in $\delta$. Because $c_0(r)=-\Lambda(r)$ in Theorem \ref{Theorem: our third result}, $S_T(a,\delta)$, as a function in $a$, is analytic at $a=0$ and discontinuous elsewhere. Furthermore, both the factors $\Delta(X)$ and $\Delta(X^{-1})$ appear in \eqref{our third result}. This fact reflects the varied behavior of $S_T(a,\delta)$ in different $X$ ranges, which is more complicated than those described in Gonek \cite[p. 93]{Gone--1} and Pearce-Crump \cite[p. 5]{PeCr}. It would be of interest to explain these phenomenons without calculation.
	
	Finally, as Steuding pointed out in \cite[p. 686]{Steu}, it would be desirable to know any number-theoretical meaning of the coefficients $c_a(r)$ with $a\ne 0$.
	
\section*{\bf Acknowledgements} The authors are grateful to Andrew Pearce-Crump and Kamel Mazhouda for their valuable comments on this paper.

\end{document}